\newtheorem{definition}{Definition}
\newtheorem{theorem}{Theorem}
\newtheorem{lemma}{Lemma}
\newtheorem{corollary}{Corollary}
\DeclareMathOperator{\Ima}{Im} 
\DeclareMathOperator{\Hom}{Hom} 
\newcommand{\KP}[1]{%
  \begin{tikzpicture}[baseline=-\dimexpr\fontdimen22\textfont2\relax]
  #1
  \end{tikzpicture}%
}
\newcommand{\KPB}{%
  \KP{
    \draw[color=gray,thick] (-0.3,0.3) -- (0.3,-0.3);
    \draw[color=gray,thick] (-0.3,-0.3) -- (-0.05,-0.05);
    \draw[color=gray,thick] (0.05,0.05) -- (0.3,0.3);
  }%
}
\newcommand{\KPC}{%
  \KP{%
    \draw[color=gray,thick] (-0.3,0.3) .. controls (0,-0.05) .. (0.3,0.3);
    \draw[color=gray,thick] (-0.3,-0.3) .. controls (0,0.05) .. (0.3,-0.3);
  }%
}
\newcommand{\KPD}{%
  \KP{%
    \draw[color=gray,thick] (-0.3,-0.3) .. controls (0.05,0) .. (-0.3,0.3);
    \draw[color=gray,thick] (0.3,-0.3) .. controls (-0.05,0) .. (0.3,0.3);
  }%
}
\author[1]{Benjamin Jones \thanks{jones657@msu.edu}}
\author[1,2,3]{Guo-Wei Wei \thanks{weig@msu.edu}}
\affil[1]{Department of Mathematics, Michigan State University, MI, 48824, USA}
\affil[2]{Department of Electrical and Computer Engineering, Michigan State University, MI 48824, USA}
\affil[3]{Department of Biochemistry and Molecular Biology, Michigan State University, MI 48824, USA}
    \renewcommand*{\@fnsymbol}[1]{\ensuremath{\ifcase#1\or \dagger\or *\or *\or
   \mathsection\or \else\@ctrerr\fi}}
\date{}
\title{Khovanov Laplacian and Khovanov Dirac for Knots and Links}
\date{December 2024}
\begin{document}
\maketitle
\paragraph{Abstract} 
Khovanov homology has been the subject of much study in knot theory and low dimensional topology since 2000. 
This work introduces a  Khovanov Laplacian and a  Khovanov  Dirac to study knot and link diagrams. The harmonic spectrum of the Khovanov Laplacian or the  Khovanov  Dirac retains the topological invariants of  Khovanov homology, while their non-harmonic spectra reveal additional information that is distinct from  Khovanov homology. 


\paragraph{Keywords}
    Khovanov homology,  Khovanov  Laplacian, Dirac, topology, knots.

\newpage
    \tableofcontents
\newpage

\section{Introduction}
Knots are ubiquitous in nature.  Biomolecules, such as DNA, RNA, and proteins, have been shown to contain knots and pseudoknots. For example, the two twisted strands of DNA can form a closed link in mitochondrial DNA and bacterial chromosomes, and the linking number is related to how tightly packed the DNA can be, its stability under high temperatures, and the speed of DNA replication and transcription, all of which have important physiological implications \cite{arsuaga2005dna}. The topology of knots and local knotted structures, such as slipknots, in proteins have been studied in \cite{shen_knot_2023}, which introduced a multiscale Gauss link integral (mGLI) and the general paradigm of knot data analysis (KDA)  for biological data. Knots, slipknots, and knotoids have been studied in proteins by the Knoto-ID and KnotProt projects \cite{Dorier_KnotoID, Jamroz_KnotProt}. 

Mathematically, knot theory is a branch of geometric topology that studies the embeddings of twisted and/or interlaced   circles into the 3-space. An important issue in the knot theory is whether two knots are equivalent, i.e., whether they can be transformed into each other through many smooth deformations. Knot invariants are a set of properties that remain unchanged under knot transformations. Commonly used knot invariants include knot polynomials, knot groups, and hyperbolic invariants. Many tools are developed to study knots, as well as their representations via planar diagrams. 

As a subject in geometric topology, knots were studied with the aforementioned  polynomials, groups, and diagrams. In 2000, Khovanov introduced a categorification  of the Jones polynomial \cite{khovanov_categorification_2000}, a celebrated polynomial link invariant that connected the fields of operator algebras, algebraic topology,  and geometric topology \cite{JonesPolynomial}. Khovanov homology is a bigraded cohomology theory for link projections that recovers the Jones polynomial from its graded Euler characteristic. Moreover, there are pairs of knots with identical Jones polynomial, but distinct Khovanov homology \cite{Bar_Natan_2002}, so the Khovanov homology contains strictly more information. It was shown by Kronheimer and Mrowka in \cite{kronheimer_khovanov_2011} that the unknot is the unique knot with Khovanov homology of rank $1$, while it is not yet known if the Jones polynomial detects the unknot. Various authors have shown that other links and knots are uniquely detected by Khovanov homology, such as the unlinks by Hedden and Ni \cite{hedden_khovanov_2013}, Hopf links by Baldwin et al. \cite{Baldwin2018KhovanovHopf}, the trefoils by Baldwin and Sivek \cite{BaldwinSivek2022trefoil}, and the figure-eight knot by Baldwin, et  al \cite{Baldwin2021figure8}. Since Khovanov's original categorification, there have been a number of related invariants developed, such as Eun-Soo Lee's homology \cite{lee_endomorphism_2005}, Rasmussen's $s$ invariant \cite{rasmussen_khovanov_2010}, Ozsv\'ath et al.'s odd Khovanov homology \cite{ozsvath_odd_2013}, and various other quantum link invariants.  

However, Khovanov homology has been rarely exploited for practical applications to data science. In contrast, persistent homology  tracks multiscale topological structure of point cloud data via a filtration process  and is the key technique for topological data analysis (TDA)   \cite{zomorodian2004computing}. TDA  has been widely applied various fields, including the visualization of relationships between knot invariants \cite{MapperOnBallMapper}. However,  persistent homology   is a coarse measurement in the sense that it ignores structures that are equivalent up to homotopy. This necessitates the development of more powerful topological techniques such as persistent Laplacians defined on point cloud data \cite{wang2020persistent}. The foundation that a persistent   Laplacian is built upon is a combinatorial Laplacian, which generalizes the graph Laplacian, a central object of spectral graph theory, to simplicial complexes \cite{Eckmann1944/45}, which can encode higher-dimensional data. Mathematical analysis of persistent   Laplacian has been reported  \cite{memoli2022persistent,liu2023algebraic}.
Persistent combinatorial Laplacians  have been extended to many topological structures, such as path complexes, directed flag complexes, directed hypergraphs, and cellular sheaves as surveyed in \cite{wei2023persistent}. These formulations, together with persistent Hodge Laplacians defined on differentiable manifolds \cite{chen2021evolutionary}, are called persistent topological Laplacians  \cite{wei2023persistent}.  
A key property of persistent topological Laplacians is that they share the same algebraic structure but may be defined on different topological spaces or data forms, such as simplicial complexes \cite{wang2020persistent}, path complexes, directed flag complexes, directed hypergraphs,  cellular sheaves, differentiable manifolds \cite{chen2021evolutionary}, and curves (including knots and links)\cite{Shen_2024}. Additionally,  combinatorial  Laplacians defined  capture strictly more information than the corresponding homology, which is studied through the non-harmonic  spectra (non-zero eigenvalues) of the Laplacian. A similar relationship also exists between the de Rham cohomology and the Hodge Laplacian defined on differentiable manifolds \cite{dodziuk1977rham,wei2023persistent}. A generalization has been given to the Mayer homology and Mayer Laplacians \cite{shen2024persistent}.   This relationship between (co)homology and topological Laplacians motivates the present study of Khovanov Laplacian through Khovanov homology defined on knots and/or links. 

An alternative perspective is  the  graph Laplacian of knots. Knots, and their multiple-component generalization, links, are often studied through planar diagrams, a 2D projection of the 3D knot or link. Under certain restrictions of the planar diagram and in combination with an unshaded checkerboard graph for the diagram, Silver and Williams constructed weighted graph Laplacians that recover a Seifert matrix, a Goeritz matrix, or an Alexander matrix for the link \cite{silver_knot_2019}. From these, they recovered the Alexander polynomial and $\omega$-signature, two important knot invariants. Their work establishes a connection between a graph Laplacian and key properties of knots. Since the extension of graph Laplacians on graphs to simplicial complexes lead to combinatorial Laplacians \cite{Eckmann1944/45}, the extension of graph Laplacians that recover knot polynomials to their corresponding knot homology theory should also give rise to knot Laplacians, such as the Khovanov Laplacian, which serves as a generalization of Khovanov homology.


Parallel to the success of persistent Laplacians \cite{wang2020persistent, wei2023persistent, meng2021persistent}, topological Dirac has become a popular topic in data science  \cite{ameneyro2024quantum, wee2023persistent, suwayyid2023persistent}. Dirac operator has various applications in physics, materials\cite{wehling2014dirac}, and dynamics \cite{carletti2024global}. Mathematically, the topological Dirac is closely related to topological Laplacian as both are defined through the boundary operator or   coboundary operator \cite{wee2023persistent,wei2023persistent}. As such, we formulate a Khovanov  Dirac for knots and links in this work as well. 
  
The rest of this paper is organized as follows. In Section \ref{sec:background} we give a brief introduction to the standard notions of combinatorial Laplacian and Dirac and Khovanov homology. In Section \ref{sec:khovanovLaplacian}, we construct the   Khovanov Laplacian and   Khovanov Dirac and analyze some of their properties. A conclusion is given in Section \ref{sec:conclusion}. Tables of the Khovanov Laplacian spectra for prime knots are contained in the supporting information. 

\section{Background}\label{sec:background}

\subsection{Combinatorial Laplacian and Dirac}\label{subsec:laplacian}
Here we briefly describe the combinatorial Laplacian and Dirac defined for an unweighted simplicial complex. The algebra structure is identical to what we will construct for the Khovanov Laplacian, but this lays the groundwork in how we will approach it while avoiding the complexities of the bigraded Khovanov cochain complex. This is an accelerated construction without much prose. More detailed treatments on combinatorial Laplacians for simplicial complexes are contained in literature \cite{horak2013spectra, LimHodge2020}.

Consider a finite simplicial complex $K$ with the set of $n$-cells $K_n$, chains $C_n=\mathrm{Span}_\mathbb{R}\{K_n\}$, and cochains $C^n=\hom(C_n,\mathbb{R})$. Writing a generic $n$-simplex as $[0,1,\dots,n]$, we use the standard boundary map $d_n:C_n\to C_{n-1}$ given by 

\[
    d_n([0,1,\dots,n]) = \sum_{i=0}^n (-1)^i [0,1,\dots,i-1,i+1,\dots,n],
\]

which satisfies $d_{n-1}\circ d_{n}=0$. By constructing the chains with real coefficients, we can equip $C_n$ with an inner product $\langle\cdot,\cdot\rangle:K_n\times K_n\to \mathbb{R}$ by defining it on basis elements via  

\[
    \langle\sigma_i,\sigma_j\rangle=\begin{cases}
        1 &\text{if }i=j,\\
        0 &\text{otherwise}
    \end{cases}
\]

and extending linearly. We may then identify $C_n$ with its dual. This induces an inner product $\llangle\cdot,\cdot\rrangle:C^n\times C^n\to\mathbb{R}$ via

\[
    \llangle f,g\rrangle = \sum_{\sigma\in C_n}f(\sigma)g(\sigma).
\]

With respect to these inner products, there is a unique adjoint of the boundary map, $d_n^*:C_{n-1}\to C_n$. We will need to explicitly refer to the matrix forms of $d_n$ and $d_n^*$, which are $B_n$ and $B_n^T$, respectively. We may now define the combinatorial Dirac and Laplacian.

\begin{definition}
    The $\boldsymbol{n}$\textbf{-th combinatorial Dirac} $\mathcal{D}_n:\bigoplus_{i\leq n+1} C_i\to \bigoplus_{i\leq n+1} C_i$ is given by
\[
    \mathcal{D}_n = \begin{bmatrix}\mathbf{0}_{m_0\times m_0} & B_1 & \mathbf{0}_{m_0\times m_2} & \cdots &\mathbf{0}_{m_0\times m_{n}} & \mathbf{0}_{m_0\times m_{n+1}}\\
    											 B_1^T & \mathbf{0}_{m_1\times m_1} & B_2 & \cdots &\mathbf{0}_{m_1\times m_{n}} &\mathbf{0}_{m_1\times m_{n+1}}\\
    											 \mathbf{0}_{m_2\times m_0}& B_2^T & \mathbf{0}_{m_2\times m_2} & \cdots & \mathbf{0}_{m_2\times m_{n}} & \mathbf{0}_{m_2\times m_{n+1}}\\
        \vdots & \vdots & \vdots & \ddots &\vdots & \vdots\\
        
        \mathbf{0}_{m_{n}\times m_0} & \mathbf{0}_{m_{n}\times m_1} & \mathbf{0}_{m_{n}\times m_2} &\cdots & \mathbf{0}_{m_n\times m_{n}} & B_{n+1}\\
         \mathbf{0}_{m_{n+1}\times m_0} & \mathbf{0}_{m_{n+1}\times m_1} & \mathbf{0}_{m_{n+1}\times m_2} &\cdots &  B_{n+1}^T & \mathbf{0}_{m_{n+1}\times m_{n+1}}\\
    \end{bmatrix},
\]

	where $m_i=\dim C_i$. 
\end{definition}

Here, $\mathcal{D}_n$ is a block matrix where the block super-diagonal consists of the boundary matrices up to $B_{n+1}$, and then the matrix is symmetrized. There are two conventions for defining the discrete Dirac operator for simplicial complexes. We have chosen the one presented in \cite{suwayyid2023persistent,wee2023persistent}, although the convention used in \cite{ameneyro2024quantum, Bianconi_2021} could also be used. If we were to translate that convention to this context, we would have a similar Dirac operator built from only $d_n$, $d_{n+1}$, and their adjoints.

\begin{definition}
    The $\boldsymbol{n}$\textbf{-th combinatorial Laplacian} $\Delta_n:C_n\to C_n$ is given by
\[
    \Delta_n = d_{n+1}\circ d_{n+1}^* + d_n^* \circ d_n.
\]

	We sometimes separately consider $\Delta_n^{\text{up}}= d_{n+1}\circ d_{n+1}^*$ and $\Delta_n^{\text{down}} = d_n^*\circ d_n$, so that $\Delta_n=\Delta_n^{\text{up}}+\Delta_n^{\text{down}}.$
\end{definition}

We take $\Delta_0=d_1 \circ d_1^*$ and for $N$ the top dimension of $K$ we take $\Delta_N=  d_N^*\circ d_N$. The combinatorial Laplacian has the following key property, first shown by Eckmann \cite{Eckmann1944/45}. 

\begin{theorem}
\[
    \ker\Delta_n \cong H_n(K; \mathbb{R})
\] 
\end{theorem}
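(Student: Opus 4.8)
The plan is to prove the standard Hodge-theoretic decomposition for the cochain complex (equivalently the chain complex, since we have fixed inner products identifying $C_n$ with $C^n$), and then identify the kernel of $\Delta_n$ with the harmonic representatives of homology classes. First I would record the adjointness relation $\langle d_n x, y\rangle = \langle x, d_n^* y\rangle$ and the crucial consequence that $x \in \ker\Delta_n$ if and only if $x \in \ker d_n \cap \ker d_{n+1}^*$: indeed $\langle \Delta_n x, x\rangle = \langle d_{n+1}^* x, d_{n+1}^* x\rangle + \langle d_n x, d_n x\rangle = \|d_{n+1}^* x\|^2 + \|d_n x\|^2$, which vanishes exactly when both summands do. So $\ker\Delta_n = \ker d_n \cap (\Ima d_{n+1})^\perp$, using $\ker d_{n+1}^* = (\Ima d_{n+1})^\perp$.

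Next I would establish the orthogonal direct sum decomposition $C_n = \ker\Delta_n \oplus \Ima d_{n+1} \oplus \Ima d_n^*$. The three subspaces are pairwise orthogonal: $\Ima d_{n+1} \perp \Ima d_n^*$ because $\langle d_{n+1} a, d_n^* b\rangle = \langle d_n d_{n+1} a, b\rangle = 0$ by $d_n \circ d_{n+1} = 0$; and $\ker\Delta_n$ is orthogonal to both by the characterization above. That their sum is all of $C_n$ follows from the finite-dimensional fact $C_n = \ker\Delta_n \oplus \Ima\Delta_n$ (self-adjointness of $\Delta_n$) together with $\Ima\Delta_n \subseteq \Ima d_{n+1} + \Ima d_n^*$, and the reverse containment, so a dimension count closes it. From this decomposition, $\ker d_n = \ker\Delta_n \oplus \Ima d_{n+1}$ (the cycles split into harmonic part plus boundaries), and therefore the quotient map restricts to a linear isomorphism $\ker\Delta_n \xrightarrow{\ \cong\ } \ker d_n / \Ima d_{n+1} = H_n(K;\mathbb{R})$, sending a harmonic chain to its homology class.

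I would close by noting the only bookkeeping subtlety is the boundary cases $\Delta_0 = d_1 \circ d_1^*$ and $\Delta_N = d_N^* \circ d_N$, where one of the two terms is absent; the same argument applies verbatim with the convention that the missing boundary map is zero (so $\ker d_0 = C_0$ and $\Ima d_{N+1} = 0$), and nothing changes. The main obstacle, such as it is, is not conceptual but organizational: making sure the identification of $C_n$ with its dual $C^n$ is used consistently so that ``adjoint'' and ``transpose'' agree, and that the self-adjointness of $\Delta_n$ (hence the spectral-theorem splitting $C_n = \ker\Delta_n \oplus \Ima\Delta_n$) is invoked cleanly. Everything else is routine linear algebra over $\mathbb{R}$, and the argument is exactly Eckmann's; no new ideas are needed beyond transcribing it into the notation fixed above.
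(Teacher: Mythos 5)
Your argument is correct and is precisely the discrete Hodge-theoretic proof that the paper delegates to Eckmann and to Theorems 5.1--5.3 of the cited reference \cite{LimHodge2020}: the identity $\langle \Delta_n x,x\rangle=\|d_{n+1}^*x\|^2+\|d_nx\|^2$, the orthogonal decomposition $C_n=\ker\Delta_n\oplus\Ima d_{n+1}\oplus\Ima d_n^*$, and the resulting isomorphism $\ker\Delta_n\cong\ker d_n/\Ima d_{n+1}$. No substantive difference from the paper's (cited) approach; the only cosmetic remark is that the ``reverse containment'' $\Ima d_{n+1}+\Ima d_n^*\subseteq\Ima\Delta_n$ you mention is not actually needed, since orthogonality to $\ker\Delta_n$ already forces both images into $\Ima\Delta_n=(\ker\Delta_n)^{\perp}$.
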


For more details on this result, see \cite{LimHodge2020}, particularly Theorems 5.1, 5.2, and 5.3.

The image of $\Delta_n$ contains additional information about $K$. It is common to analyze $\Delta_n$ via its eigenvalues, which are all real and nonnegative because $\Delta_n$ is self-adjoint and positive semi-definite. We may order them in nondecreasing order $S_n=\{\lambda_0,\lambda_1,\dots,\lambda_M\},$ where $M=\dim C_n$. 

\begin{definition}
    We refer to the $0$-eigenvalues of $\Delta_n$ as the \textbf{harmonic spectra}, the nonzero eigenvalues as \textbf{non-harmonic} spectra, and the full multiset $S_n$ of eigenvalues as the \textbf{spectra}.
\end{definition}

The multiplicity of zero as an eigenvalue is the dimension of the null space of $\Delta_n$, and hence the dimension of $H_n(K)$, which is the $n$-th Betti number $\beta_n$. It is common to distinguish the least nonzero eigenvalue, denoted $\lambda$, because in the special cases where $\Delta_n$ coincides with a graph Laplacian (i.e., the  0-th combinatorial Laplacian), this $\lambda$ is called the Fiedler value or spectral gap and communicates information about the graph such as algebraic connectivity. 

In differential geometry, the Hodge Dirac operator squares to the Hodge Laplacian operator. In this setting, we have a similar relationship: $\mathcal{D}_n^2$ is a block diagonal matrix where the diagonal is composed of $\Delta_0,\Delta_1,\dots, \Delta_n,\Delta_{n+1}^{\text{down}}$.

We note that the terms ``spectrum," ``spectra," and ``spectral" are widely used for other concepts in algebraic and geometric topology. In this work we exclusively refer to spectra as the eigenvalues of self-adjoint linear operators, and not to any of the spectral sequences related to Khovanov homology.

\subsection{Khovanov Homology}

Here we introduce the notions of   knots, links, and Khovanov homology. Further information on the standard notions of knot theory is in \cite{lickorish_introduction_1997}. For Khovanov homology, the interested reader is referred to an excellent introduction by Dror Bar-Natan in  \cite{Bar_Natan_2002}. Khovanov homology is a bigraded homology theory for links, and the graded Euler characteristic yields the Jones polynomial. We lay out essentially the same definitions and notations as \cite{Bar_Natan_2002}.

A \textbf{link} $L$ is an embedding of finitely many copies of $S^1$ into $\mathbb{R}^3$ or $S^3$, and a \textbf{knot} is a link with only $1$ component.  Two links are \textbf{equivalent} if there is an ambient isotopy between their embeddings. For any embeddings $f,g: N\to M$ of manifolds, an \textbf{ambient isotopy} from $f$ to $g$ is a family of maps $F_t:M\to M$ for $t\in[0,1]$ such that $F_0$ is the identity, $F_1\circ f=g$, and $F_t$ is a homeomorphism for all $t$. Links are often studied by their projections onto $\mathbb{R}^2$. When such a projection is injective except at finitely many points and where intersections are of only two segments of $L$, we can keep track of which segment is ``over" and which is ``under." A projection following these rules is called a \textbf{link diagram} or \textbf{knot diagram}. 

Two link diagrams are equivalent whenever there is a finite sequence of transformations called \textbf{Reidemeister moves} that transform one diagram to another. There are three Reidemeister moves, depicted in Figure \ref{fig:reidmeister}, which are often described as twisting and untwisting (Reidemeister 1 or R1), poking (Reidemeister 2 or R2), and sliding under or over a crossing (Reidemeister 3 or R3). Two link diagrams are equivalent by a sequence of Reidemeister moves if and only if the corresponding links are equivalent by ambient isotopy. Because they characterize equivalence of links, link diagrams are a common tool for displaying and computing properties of links. Many interesting properties are called \textbf{link invariants} (or \textbf{knot invariants}), which are structures and numbers that can be computed from a diagram, and remain unchanged by the Reidemeister moves. Prominent examples of link or knot invariants include the Jones polynomial \cite{JonesPolynomial}, crossing number \cite{lickorish_introduction_1997}, Alexander polynomial \cite{alexander_topological_1928}, and Khovanov homology \cite{khovanov_categorification_2000}.

\begin{figure}[htpb]
    \centering
    \begin{subfigure}{0.2\textwidth}
        \includegraphics[width=0.8\textwidth]{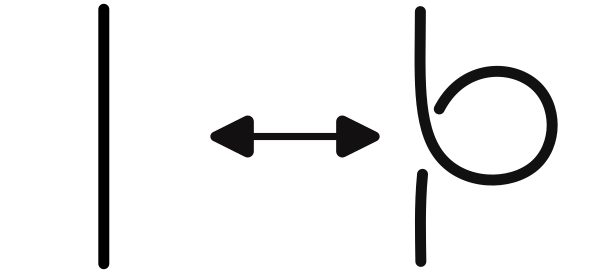}
        \caption{R1}
    \end{subfigure}
    \begin{subfigure}{0.2\textwidth}
        \includegraphics[width=0.8\textwidth]{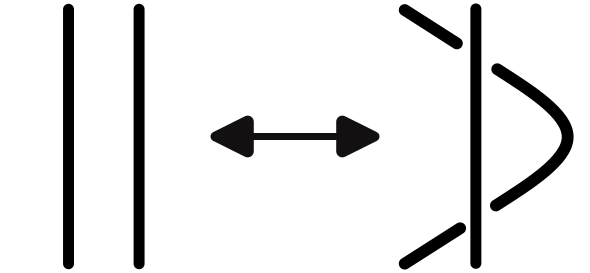}
        \caption{R2}
    \end{subfigure}
    \begin{subfigure}{0.2\textwidth}
        \includegraphics[width=0.8\textwidth]{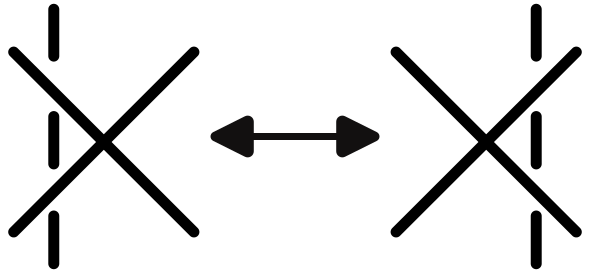}
        \caption{R3}
    \end{subfigure}
    \caption{The three Reidemeister moves.}
    \label{fig:reidmeister}
\end{figure}

For a recurring example, take the projection of the right-handed trefoil knot in Figure \ref{fig:trefoil}(a). We will use a Planar Diagram (PD) notation for the link projection that leads to a well-defined system for labeling cycles in a smoothing and ultimately the basis elements for our chain complex. This will allow us to easily compute a matrix form of the boundary map, which we will need for the Laplacian. 

For an oriented knot diagram, label the edges with increasing natural numbers following the orientation. For each component in a link diagram, do the same process, but with disjoint sets of natural numbers for each component. From this, one can label the crossings by  $X_{ijkl}$, where $i, j, k$, and $l$ the four edges that meet at the crossing, starting with the edge $i$ that is about to go under and listing the rest in counterclockwise order. In Figure \ref{fig:trefoil}(b) only the edges are labeled, and then in Figure \ref{fig:trefoil}(c) both the edges and the crossings are labeled. 

\begin{figure}[htpb]
    \centering
    \begin{subfigure}[t]{0.2\textwidth}
        \includegraphics[width=0.8\textwidth]{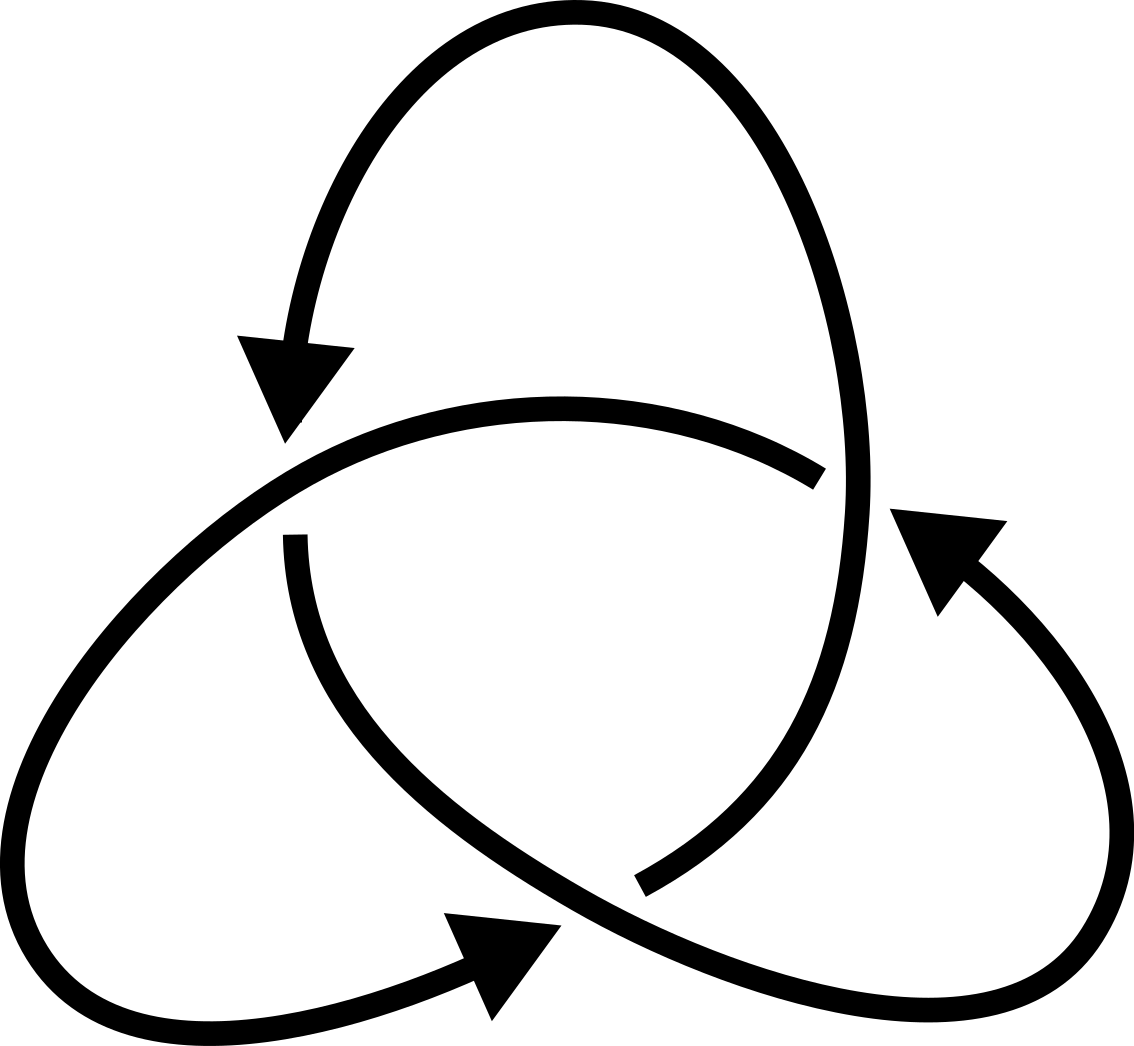}
        \caption{Right handed trefoil}
    \end{subfigure}
    \begin{subfigure}[t]{0.2\textwidth}
        \includegraphics[width=0.8\textwidth]{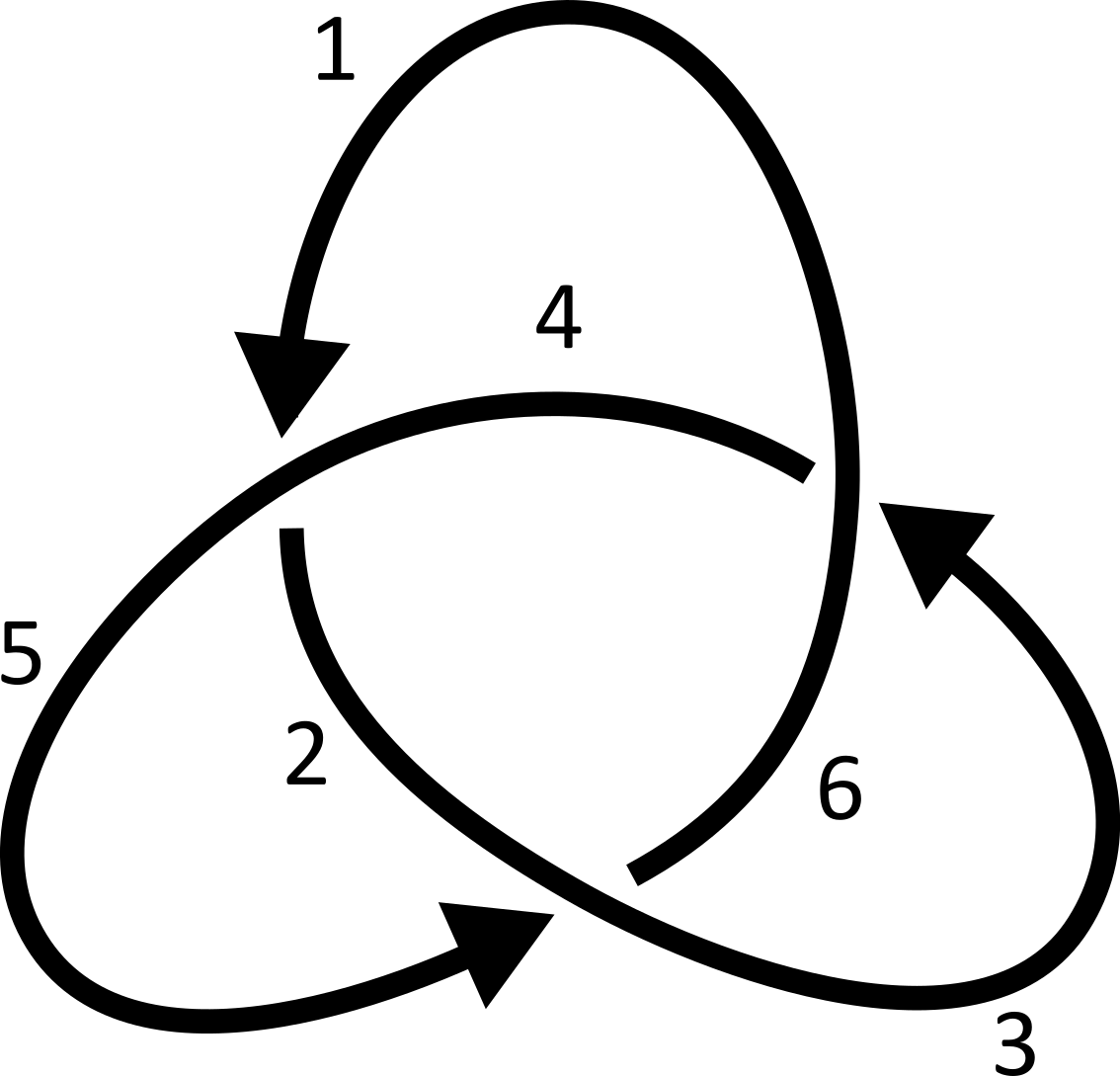}
        \caption{Right handed trefoil with edges labeled}
    \end{subfigure}
    \begin{subfigure}[t]{0.2\textwidth}
        \includegraphics[width=0.8\textwidth]{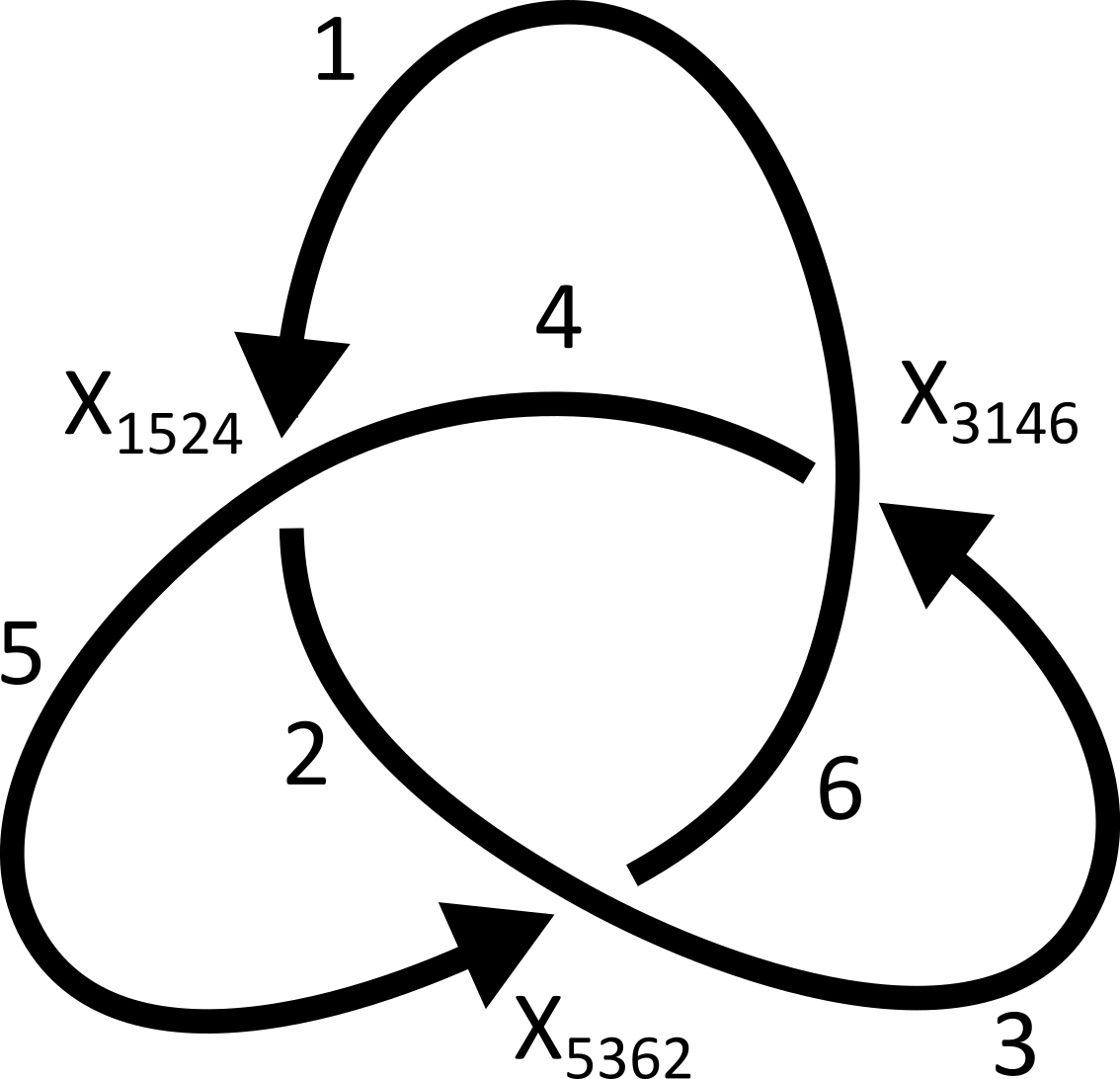}
        \caption{Right handed trefoil with edges and crossings labeled}
    \end{subfigure}
    \caption{Three representations of the right handed trefoil.}
    \label{fig:trefoil}
\end{figure}

Let $L$ be an oriented link projection, $\chi(L)$ an ordered set of crossings of $L$, $n=|\chi(L)|$ the number of crossings, $n_+$ the number of positive crossings, and $n_-$ the number of negative crossings. Let $ \KPC$ denote the $0$-smoothing of the crossing $\KPB$ and let $\KPD$ be the $1-$smoothing so that each vertex $\alpha$ in the cube $\{0,1\}^{\chi(L)}$ has a smoothing $S_\alpha$ of $L$ into a collection of cycles with no crossings, such as $S_{101}$ in Figure \ref{fig:v101}, which has the original edge labels retained. We call the collection of all smoothings of $L$ the \textbf{``cube of smoothings."} The edges of the cube correspond to smoothings that differ by only exchanging one $0$-smoothing for a $1$-smoothing or vice versa, such as $S_{101}$ and $S_{111}$. We can denote each edge by an element of $\{0,1,\star\}^{\chi(L)}$.  Moreover, we can give a direction to the edge by assigning the source to be the smoothing in which $\star$ is replaced by $0$ and the target to be where $\star$ is replaced by $1$. Then $\xi=1\star 1$ is an edge from $S_{101}$ to $S_{111}$.

\begin{figure}[htpb]
    \centering
    \includegraphics[width=0.2\textwidth]{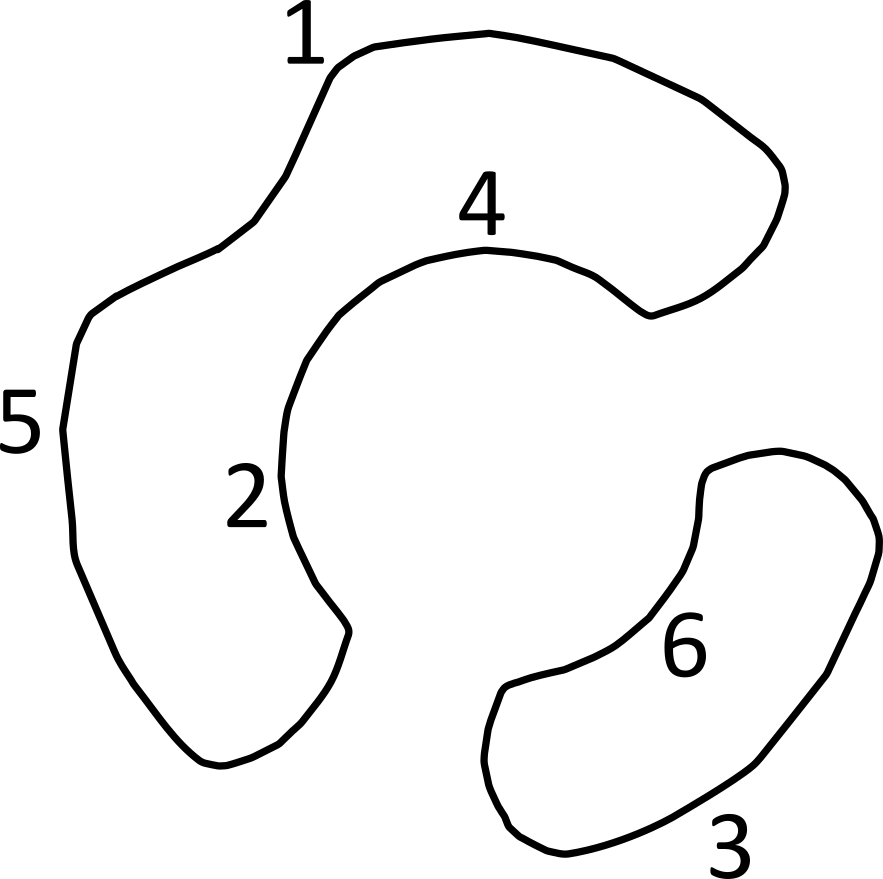}
    \caption{Smoothing $S_{101}$ of the labeled trefoil in Figure \ref{fig:trefoil}(b), with edge labels retained. The left, larger cycle is ``$1$" since the lowest edge label is $1$, and the right, smaller cycles is ``$3$" since the lowest edge label is $3$.}
    \label{fig:v101}
\end{figure}

With the edges and crossings in the non-smoothed diagram labeled, we can systematically label the cycles in a smoothing of the diagram. This will allow us to distinguish basis elements in the chain complex we will create. We use the convention of \cite{Bar_Natan_2002}, which is to label each cycle in a smoothing $S_\alpha$ by the edge in that cycle with the lowest label. In Figure \ref{fig:v101}, the larger cycle on the top left is therefore ``$1$" and the smaller cycle is ``$3$".

From these smoothings, we can create the graded vector spaces that we will stitch together to form the cochain complex. For $W=\bigoplus_j W_j$ a graded vector space with homogeneous components $\{W_j\}$, the \textbf{graded dimension} of $W$ is $\widehat{\dim} W=\sum_j q^j \dim W_j$. Most often in Khovanov homology, the base field is taken to be $\mathbb{Z}_2, \mathbb{Q}$, or generalized to modules over the polynomial ring $\mathbb{Z}[c]$. We will ultimately be using $\mathbb{R}$ for its standard inner product structure. Next, define a \textbf{degree shift} operation by $W\{l\}_j=W_{j-l}$ so that $\widehat{\dim} W\{l\}=q^l \widehat{\dim} W$. Let $V$ be a graded vector space with two basis elements $v_\pm$ of degree $\pm 1$, so that $\widehat{\dim} V=q^{-1}+q$. For $\alpha\in\{0,1\}^{\chi(L)}$, written $\alpha=(\alpha_1,\alpha_2,\dots,\alpha_n)$, define the height $\ell(\alpha)=\sum_{i=1}^n \alpha_i$, which is the number of ones occurring in $\alpha$. Next, let $c(\alpha)$ be the number of cycles in $S_\alpha$, and let $V_\alpha(L)=V^{\otimes c(\alpha)}\{\ell(\alpha)\}$. Since we have a labelling system for each cycle in a smoothing $S_\alpha$ and we have one copy of $V$ for each cycle, we can label the copies of $V$. For the trefoil with smoothing $S_{101}$ in Figure \ref{fig:v101}, we have $V_{101}=V_1\otimes V_3\{2\}$.

For any cochain complex $A$ given by $\cdots\to A^n\xrightarrow[]{d^n} A^{n+1}\to\cdots$, define the \textbf{height shift} of $A$ by $s$, denoted $A[s]$, to be $(A[s])^n = A^{n-s}$, with differentials shifted accordingly. The \textbf{Khovanov bracket} of $L$ is the chain complex $\llbracket L\rrbracket$ with the $r$th chain group defined by 

\[
    \llbracket L \rrbracket^r = \bigoplus_{\ell(\alpha)=r} V_\alpha(L),
\]

and we define a \textbf{normalized chain complex} by

\[
    \mathcal{C}(L) = \llbracket L \rrbracket[-n_{-}]\{n_{+}-2n_{-}\}.
\]

In our example of the right-handed trefoil, there are $3$ positive crossings and $0$ negative crossings, so $\mathcal{C}(L)= \llbracket L\rrbracket[0]\{3\}$. To define the differentials of this cochain complex, we first define a multiplication map, $m:V\otimes V\to V$, and a comultiplication map, $\Delta:V\to V\otimes V$:

\begin{align*}
    m(v_+\otimes v_+) &= v_+\\
    m(v_-\otimes v_-) &= 0\\
    m(v_+\otimes v_-) &= m(v_-\otimes v_+) = v_-\\
    \Delta(v_+) &= v_+\otimes v_- + v_-\otimes v_+\\
    \Delta(v_-) &= v_-\otimes v_-.
\end{align*}

The multiplication can be identified with a cobordism merging the cycles and the comultiplication to splitting cycles when moving along an edge in the cube of smoothings of $L$. We use the standard notation $\Delta$ here for the comultiplication and elsewhere in this paper we use the standard notation $\Delta_n$ for the Khovanov Laplacian, though they are unrelated. We will not need to refer to the comultiplication directly outside of this subsection.

To build the chain complex differentials, we first build a differential $d_\xi:V_\alpha(L)\to V_\beta(L)$ for each edge $\xi$ from $S_{\alpha}$ to $S_{\beta}$ in the cube of smoothings. We apply the multiplication and comultiplication maps to the tensor factors of $V_\alpha$ that correspond to cycles that merge or split. More explicitly, when transitioning from a smoothing $S_\alpha$ to $S_\beta$, either one of the cycles of the smoothing splits and we define $d_\xi$ to be the comultiplication on the corresponding component of $V_\alpha$, or two of the cycles merge and we define the $d_\xi$ to be the multiplication on the corresponding components of $V_\alpha$. On the tensor factors corresponding to cycles that do not split or merge, we define $d_\xi$ to be the identity. 

This differential is a map $d_\xi:V_\alpha\to V_\beta$, and now we want to extend to a differential $d^{r}:\llbracket L \rrbracket^{r}\to\llbracket L \rrbracket^{r+1}$ on the chain complex $\llbracket L \rrbracket$, and ultimately to $\mathcal{C}(L)$. Let $\xi_i$ be the element at position $i$ in $\xi$, and let $j$ be the index of $\star$ in $\xi$.  We will assemble all of the differentials on the chain complex $\llbracket L \rrbracket$ from alternating sums of the differentials $d_{\xi}$, where the sign is given by $\mathrm{sgn}(\xi)=(-1)^{\sum_{i<j}\xi_i}$, which is positive if there are an even count of ones before the $\star$ and negative if the count of ones is odd. We will also set $|\xi| = \ell(\alpha)$, where $\alpha$ is the smoothing where $\star$ is replaced by $0$.  Then we define $d^r:\llbracket L \rrbracket^{r}\to\llbracket L \rrbracket^{r+1}$ by

\[
    \sum_{|\xi|=r} \mathrm{sgn}(\xi)d_\xi.
\]

The fact that $d^{r+1}\circ d^{r}=0$ and the motivation for this system of assigning signs is discussed in more detail in \cite{Bar_Natan_2002}. This differential is the final ingredient we need to call $\llbracket L \rrbracket$ a cochain complex, and therefore also $\mathcal{C}(L)$ is a cochain complex. We may then compute the homology of $\mathcal{C}(L)$ 

\begin{definition}
    The $\boldsymbol{r}$\textbf{-th Khovanov homology} is the $r$-th cohomology of the chain complex $\mathcal{C}(L)$,

\[
    H^r(L) = \ker d^{r} / \Ima d^{r-1},
\]   

and we define the \textbf{graded Poincar\'e polynomial}
\[
    Kh(L) = \sum_{r}t^r \widehat{\dim} H^{r}(L).
\]

The $q$-graded components of $H^r(L)$ are written $H^{r,q}(L)$. When we wish to emphasize the coefficient field $\mathbb{K}$, we write $H_{\mathbb{K}}^r(L)$ and $H_{\mathbb{K}}^{r,q}(L).$ 
\end{definition}

We can now state Khovanov's main results \cite{khovanov_categorification_2000}.

\begin{theorem}
    The graded dimensions of $H^{r}(L)$ are link invariants.
\end{theorem}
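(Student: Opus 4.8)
The plan is to reduce the statement to local invariance under the three Reidemeister moves and then exhibit, for each move, a bigraded chain homotopy equivalence of normalized complexes. Concretely: since two link diagrams represent equivalent links if and only if they are related by a finite sequence of Reidemeister moves (stated above), it suffices to show that whenever $D$ and $D'$ differ by a single R1, R2, or R3 move applied inside a disk, the chain complexes $\mathcal{C}(D)$ and $\mathcal{C}(D')$ are chain homotopy equivalent by maps that are homogeneous of degree $0$ in the homological grading $r$ and degree $0$ in the quantum grading $q$. Because we work over the field $\mathbb{R}$, homotopy equivalent complexes have isomorphic homology, and a bigraded equivalence gives $H^{r,q}(D)\cong H^{r,q}(D')$ for all $r,q$; hence $\widehat{\dim}H^r(D)=\widehat{\dim}H^r(D')$ and $Kh(D)=Kh(D')$, which is the assertion.

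The algebraic engine I would use is the cancellation (Gaussian elimination) lemma for complexes in an additive category: if, in a complex, a direct summand $A$ of the term in homological degree $k$ is carried isomorphically by a component of the differential onto a direct summand $B$ of the term in degree $k+1$, then deleting $A$ and $B$ and correcting the remaining differentials by the usual "zig-zag" term produces a homotopy equivalent complex, and the equivalence can be written down explicitly. For the R1 move, the extra crossing contributes a small cube of two smoothings differing by a split/merge circle; after applying $m$ or $\Delta$ one obtains a component of the differential that is an isomorphism onto a copy of $V$, and a single cancellation collapses $\mathcal{C}(D)$ onto $\mathcal{C}(D')$. The point requiring care is the grading: the extra circle shifts the quantum degree through the factor $\{\ell(\alpha)\}$ in $V_\alpha(L)$, and the change in $n_\pm$ shifts it through the normalization $[-n_-]\{n_+-2n_-\}$; these must be checked to cancel exactly so that the surviving equivalence is $q$-graded. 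Since $m$ and $\Delta$ are homogeneous of $q$-degree $-1$ and the identity maps on untouched circles have degree $0$, every map built during cancellation — including the homotopies — is automatically $q$-homogeneous, which is what delivers bigradedness. R2 is handled by two applications of the same cancellation (one for each new crossing).

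The main obstacle is the R3 move. Here I would first apply R2-type cancellations to both $\mathcal{C}(D)$ and $\mathcal{C}(D')$ to bring each to a reduced complex (a double mapping cone over the four-dimensional subcube of the three crossings involved), and then verify that these two reduced complexes are isomorphic via an explicit, sign-corrected chain isomorphism, as in Bar-Natan's treatment. This is the step where the bookkeeping is genuinely delicate: the identification of which circles merge or split on the two sides of the move, together with the edge signs $\mathrm{sgn}(\xi)=(-1)^{\sum_{i<j}\xi_i}$, must be tracked so that the proposed isomorphism actually commutes with the differentials, and a naive choice of identification will fail by a sign on some square and need to be adjusted. I expect essentially all of the work to lie in the R3 verification and in confirming that every intermediate homotopy is bigraded; the reduction to the three local moves, the R1 and R2 cases, and the grading arithmetic for the normalization shifts are comparatively routine.
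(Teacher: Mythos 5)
The paper does not actually prove this theorem: it is quoted as one of Khovanov's main results and attributed to \cite{khovanov_categorification_2000}, so there is no in-paper argument to compare against. Your sketch is the standard proof of that cited result --- reduction to the three Reidemeister moves, a bigraded chain homotopy equivalence for each via the Gaussian elimination lemma, with R1 and R2 collapsing by one and two cancellations respectively and R3 handled by reducing both sides and matching the resulting cones --- which is exactly Khovanov's original invariance argument in the streamlined form given by Bar-Natan. Your attention to the grading bookkeeping is the right emphasis: the raw $m$ and $\Delta$ have quantum degree $-1$, which is offset by the $\{\ell(\alpha)\}$ shift to make the cube differentials degree $0$, and the change in $n_\pm$ under R1 (which comes in a positive and a negative variant, shifting the normalization differently) is precisely what the $[-n_-]\{n_+-2n_-\}$ normalization is designed to absorb. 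Two minor remarks: homotopy equivalent complexes have isomorphic homology over any coefficient ring, so the appeal to working over a field is unnecessary; and as you acknowledge, the R3 verification is left as a pointer to Bar-Natan rather than carried out, so as written this is a correct plan rather than a complete proof --- but the plan contains no wrong turns.
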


\begin{theorem}
    The Poincar\'e polynomial $Kh(L)$ at $t=-1$ is the un-normalized Jones polynomial of $L$.
\end{theorem}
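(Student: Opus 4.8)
The plan is to show that evaluating $Kh(L)$ at $t=-1$ is the graded Euler characteristic of the normalized complex $\mathcal{C}(L)$, to compute that Euler characteristic at the level of chain groups, and to recognize the resulting expression as the Kauffman-bracket state sum for the unnormalized Jones polynomial. Since $Kh(L)=\sum_r t^r\,\widehat{\dim}\,H^r(L)$, specializing to $t=-1$ gives $Kh(L)\big|_{t=-1}=\sum_r(-1)^r\,\widehat{\dim}\,H^r(L)$, so everything comes down to a graded alternating-sum computation.

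First I would record that each differential $d^r$ is homogeneous of degree $0$ for the quantum grading $q$: the maps $m$ and $\Delta$ each decrease the internal degree of $V^{\otimes\bullet}$ by one, while moving from height $r$ to height $r+1$ increases the degree shift $\{\ell(\alpha)\}$ by one, and the two effects cancel. Consequently $\mathcal{C}(L)$ decomposes as a direct sum over $q$ of ordinary finite-dimensional cochain complexes $\mathcal{C}^{\bullet,q}(L)$, and in each of these the rank-nullity identity gives $\sum_r(-1)^r\dim H^{r,q}(L)=\sum_r(-1)^r\dim\mathcal{C}^{r,q}(L)$. Re-summing over the internal grading, and using the definitions of $Kh$ and of $\widehat{\dim}$, this yields
\[
    Kh(L)\big|_{t=-1}=\sum_r(-1)^r\,\widehat{\dim}\,\mathcal{C}^r(L).
\]

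Next I would compute the right-hand side directly from the cube of smoothings. Since $\widehat{\dim}$ is additive on direct sums and multiplicative on tensor products, $\widehat{\dim}V=q+q^{-1}$ and $V_\alpha(L)=V^{\otimes c(\alpha)}\{\ell(\alpha)\}$ give $\widehat{\dim}V_\alpha(L)=q^{\ell(\alpha)}(q+q^{-1})^{c(\alpha)}$, hence $\widehat{\dim}\llbracket L\rrbracket^r=q^r\sum_{\ell(\alpha)=r}(q+q^{-1})^{c(\alpha)}$. Unwinding the height shift $[-n_-]$ (which re-indexes $r\mapsto r+n_-$ and so contributes the parity factor $(-1)^{n_-}$ to the alternating sum) and the global degree shift $\{n_+-2n_-\}$ (which contributes the monomial $q^{\,n_+-2n_-}$), and collapsing the alternating sum over the whole cube, I obtain
\[
    Kh(L)\big|_{t=-1}=(-1)^{n_-}q^{\,n_+-2n_-}\sum_{\alpha\in\{0,1\}^{\chi(L)}}(-q)^{\ell(\alpha)}(q+q^{-1})^{c(\alpha)}.
\]
The final step is to recognize this as the unnormalized Jones polynomial: the $0$- and $1$-smoothings are the two Kauffman resolutions, the factor $(q+q^{-1})^{c(\alpha)}$ records the graded dimension $q+q^{-1}$ per resolved circle, and $(-1)^{n_-}q^{\,n_+-2n_-}$ is exactly the writhe normalization converting the Kauffman bracket into a link invariant; this is the standard change of variable between the Kauffman bracket and $\hat J$, which one may either cite or confirm by checking that the right-hand side satisfies the defining skein relation and equals $q+q^{-1}$ on the unknot.

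I expect no deep obstacle: the conceptual content---Euler characteristics are homological invariants and the chain-level count is precisely the Kauffman state sum---is routine. The only places where genuine care is needed are the bookkeeping of the two shift operations (extracting the sign $(-1)^{n_-}$ and the factor $q^{\,n_+-2n_-}$ correctly while keeping the quantum grading split, so that the Euler-characteristic argument is legitimate) and importing the known identification of the Kauffman-bracket state sum with the Jones polynomial.
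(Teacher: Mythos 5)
Your argument is correct and is exactly the standard proof of this result: the paper itself does not prove the theorem but states it as one of Khovanov's main results with a citation, and the proof in \cite{khovanov_categorification_2000} (and Bar-Natan's exposition \cite{Bar_Natan_2002}) proceeds precisely as you do --- degree-$0$ differentials split the complex by quantum grading, the graded Euler characteristic of homology equals that of the chain groups, and the chain-level count collapses to the writhe-normalized Kauffman state sum $(-1)^{n_-}q^{\,n_+-2n_-}\sum_{\alpha}(-q)^{\ell(\alpha)}(q+q^{-1})^{c(\alpha)}$. Your bookkeeping of the two shifts (the sign $(-1)^{n_-}$ from $[-n_-]$ and the monomial $q^{\,n_+-2n_-}$ from the degree shift) is also handled correctly.
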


We will need to explicitly compute the differentials for the Laplacian we construct, so here we explain the computation of a differential in the standard case. For practical and efficient computations, cancellation techniques are often used \cite{Bar-Natan_Fast}. We will not do such reductions, because they could impact the non-harmonic spectra of the Laplacian, and we want to first establish the most direct construction before exploring variants. To compute a differential $d^r:\llbracket L \rrbracket^{r}\to \llbracket L \rrbracket^{r+1}$, we first write a basis for each $\llbracket L \rrbracket^r=\bigoplus_{\ell(\alpha)=r}V_\alpha(L)$, which we can assemble from a basis of each $V_\alpha(L)$. For example, we can label the basis elements of $V_3(L)$ by $v_{+}^{3}$ and $v_{-}^{3}$ and similarly for $V_1$, so that a basis of $V_{101}$ is 

\[\{v_{+}^{1}\otimes v_{+}^{3},v_{-}^{1}\otimes v_{+}^{3}, v_{+}^{1}\otimes v_{-}^{3}, v_{-}^{1}\otimes v_{-}^{3} \}.
\]

For convenience we will sometimes omit the tensor product, e.g. $v_{+}^{1}\otimes v_{-}^{3} = v_{+}^{1}v_{-}^{3}$. The full chain group $\llbracket L \rrbracket^r$ is a direct sum of these tensor products, so a basis of $\llbracket L \rrbracket^r$ can be obtained from the union of these bases. To clarify which smoothing a basis element corresponds to, we may label it as $V_{101}v_{+}^{1}v_{-}^{3}$. There are three edges associated with $S_{101}$: $(\star 01), (1\star 1),$ and $(10\star)$, but only $\xi=1\star1$ is outgoing to $S_{111}$, so we compute $d_{1\star 1}:V_{101}\to V_{111}$, or

\[
    d_{1\star 1}:V_1\otimes V_3\{2\}\to V_1\otimes V_2\otimes V_3\{3\}.
\]

Cycle $1$ of $S_{101}$ splits to give two cycles in $S_{111}$, so $d_{1\star 1}$ is the identity on $V_3$ and comultiplication on $V_1$. For example, 

\begin{align*}
d_{1\star 1}(v_{+}^{1}\otimes v_{-}^{3}) &= (v_{+}^{1}\otimes v_{-}^{2} + v_{-}^{1}\otimes v_{+}^{2} )\otimes v_{-}^{3}\\
 &= v_{+}^{1}\otimes v_{-}^{2}\otimes v_{-}^{3} + v_{-}^{1}\otimes v_{+}^{2}\otimes v_{-}^{3}.
\end{align*}

As an example of a boundary map $d^r$ on the full chain complex, the chain groups $\llbracket L \rrbracket^0$ and $\llbracket L \rrbracket^1$ have dimensions $4$ and $6$, respectively, and so $d^0:\llbracket L \rrbracket^0\to\llbracket L \rrbracket^1$ is a $6\times 4$ matrix. To calculate this, we first observe that smoothing $S_{000}$ has two components, and all of the edges originating at it, $0 0\star$, $0\star 0$, and $\star 0 0$, all point toward smoothings with one component. The sign of the differential for each edge will be $1$, since there are zero ones before the $\star$ in each edge. In each case, both cycles merge, so the differential is defined to be the multiplication map $m:V\otimes V\to V$. For this calculation, we subscript $m$ by the edge it corresponds to, e.g. $m_{0\star 0}$, to clarify the connection between steps. For example,

\begin{align*}
    d^0(V_{000}v_{-}^{1}v_{+}^{2}) &= d_{0 0\star}(v_{-}^{1}v_{+}^{2}) + d_{0\star 0}(v_{-}^{1}v_{+}^{2}) + d_{\star 0 0}(v_{-}^{1}v_{+}^{2})\\
    &= m_{0 0\star}(v_{-}^{1}v_{+}^{2}) + m_{0\star 0}(v_{-}^{1}v_{+}^{2}) + m_{\star 0 0}(v_{-}^{1}v_{+}^{2})\\
    &= V_{001}v_{-}^{1} + V_{010}v_{-}^{1} + V_{001}v_{-}^{1},
\end{align*}

where the final equality comes from the definition of multiplication $m(v_-\otimes v_+)=m(v_+\otimes v_-)=v_-$. The exact same calculation works for $d^0(V_{000}v_{+}^{1}v_{-}^{2})$. By using the definition $m(v_+\otimes v_+)=v_+$ and a similar calculation we can obtain $d^0(V_{000}v_{+}^{1}v_{+}^2)$, and since $m(v_-\otimes v_-)=0$ we find that $d^0(V_{000}v_{-}^{1}v_{-}^{2})=0$. This can be represented as a matrix,

\begin{align*}
    d^{0} &= \begin{blockarray}{cccccc}
          &  &V_{000}             & V_{000} & V_{000} & V_{000} \\
          &  & v_{-}^1 v_{-}^{2} &v_{-}^1 v_{+}^{2} & v_{+}^1 v_{-}^{2}& v_{+}^1 v_{+}^{2}\\
            \begin{block}{cc(cccc)}
              V_{001} & v_{-}^{1} & 0 & 1 & 1 & 0 \\
              V_{001} & v_{+}^{1} & 0 & 0 & 0  & 1 \\
              V_{010} & v_{-}^{1} & 0 & 1 & 1  & 0 \\
              V_{010} & v_{+}^{1} & 0 & 0 & 0 & 1 \\
              V_{100} & v_{-}^{1} & 0 & 1 & 1  & 0 \\
              V_{100} & v_{+}^{1} & 0 & 0 & 0  & 1 \\
        \end{block}
        \end{blockarray}.
\end{align*}

The kernel of this map is spanned by $V_{000}v_{-}^{1}v_{-}^{2}$ and $V_{000}v_{-}^{1}v_{+}^{2}-V_{000}v_{+}^{1}v_{-}^{2}$. The height shift in $\mathcal{C}(L)=\llbracket L\rrbracket\{n_{-}\}[n_{+}-2n_{-}]$ is trivial here, so $d^0:\llbracket L \rrbracket^0\to\llbracket L \rrbracket^1$ and $d^0:\mathcal{C}^0(L)\to \mathcal{C}^1(L)$ differ only by the degree shift. Since $\llbracket L\rrbracket^0$ is the chain group with least possible number of $1$-smoothings, $H^0(L)=\ker d^0$, so $\dim H^0(L)=2$. 

\section{Khovanov Laplacian and   Khovanov Dirac}\label{sec:khovanovLaplacian}

\subsection{Construction}
Here we define the Khovanov Laplacian and Khovanov Dirac. To do so we must fix our base field as $\mathbb{R}$. Note that the Khovanov homology is defined with respect to any field, $\mathbb{Z}$, or $\mathbb{Z}[c]$, but most commonly $\mathbb{Z}, \mathbb{F}_2,$ and $\mathbb{Q}.$ To build a Khovanov Laplacian and Khovanov Dirac, the main structure we require of our coefficient system is an inner product space to produce a well-defined adjoint of the boundary map, so we will use coefficients in $\mathbb{R}$. The construction could be recast in terms of rational coefficients and rational Khovanov homology, but it is generally convenient for the eigenvalues to be in the same field as our coefficients. For the remainder of the paper, we assume that each $V$ is a graded vector space over $\mathbb{R}$, and hence the groups $\llbracket L \rrbracket^r$ and $\mathcal{C}^r(L)$ are tensor products and direct sums of vector spaces over $\mathbb{R}$. Similarly, we take homology with coefficients in $\mathbb{R}$.   
The part of the Khovanov homology we want to produce from the kernel of our Laplacian is the graded components $H^{r,q}(L)$, which give the link invariants. We will therefore need to explicitly compute chains at height $r$ and degree $q$, $\mathcal{C}^{r,q}(L)$, which are the $q$-graded subspaces of $\mathcal{C}^{r}(L)$, i.e. $\mathcal{C}^{r}(L)=\bigoplus_{q}\mathcal{C}^{r,q}(L)$. The Khovanov complex differentials are constructed to be of quantum degree $0$, so all degree $q$ elements of $\mathcal{C}^{r}(L)$ will have boundaries that are degree $q$ elements of $\mathcal{C}^{r+1}(L)$. Then we can define our boundary maps via restriction of $d^r$ to  $d^{r,q}:\mathcal{C}^{r,q}(L)\to \mathcal{C}^{r+1,q}(L)$. This leads to the following lemma:

\begin{lemma} The complex
    \[
    \cdots\to \mathcal{C}^{r-1,q}(L)\xrightarrow[]{d^{r-1,q}} \mathcal{C}^{r,q}(L)\xrightarrow[]{d^{r,q}} \mathcal{C}^{r+1,q}(L)\to\cdots
    \]

    is a cochain complex. Moreover, the homology of this cochain complex is the graded Khovanov homology,

    \[
        \ker d^{r,q}/\Ima d^{r-1,q} \cong H^{r,q}(L).
    \]
    \label{lemma:subcomplex}
\end{lemma}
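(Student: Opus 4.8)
The plan is to verify the two claims in sequence, both of which follow from the single structural fact that the Khovanov differential is homogeneous of quantum degree $0$. First I would note that this degree-$0$ property is essentially built into the construction: the maps $m$ and $\Delta$ each raise quantum degree by $-1$, and the height shift $\{\ell(\alpha)\}$ in $V_\alpha(L)$ increases by $1$ along each cube edge, so the composite $d_\xi$ preserves the internal grading; summing over $\xi$ with signs does not change this, and neither does the overall shift $\{n_+-2n_-\}$ in passing to $\mathcal{C}(L)$. Hence $d^r(\mathcal{C}^{r,q}(L))\subseteq \mathcal{C}^{r+1,q}(L)$, which is exactly what is needed to define the restriction $d^{r,q}$ as a genuine map of vector spaces. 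I would state this as the first step, perhaps citing \cite{Bar_Natan_2002} or the explicit formulas for $m$ and $\Delta$ given above, since a careful reader will accept it quickly.

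Second, I would show the restricted maps still form a cochain complex, i.e. $d^{r+1,q}\circ d^{r,q}=0$. This is immediate: $d^{r,q}$ is the restriction of $d^r$ to an invariant subspace, so $d^{r+1,q}\circ d^{r,q}$ is the restriction of $d^{r+1}\circ d^r=0$, which was established for $\llbracket L\rrbracket$ and hence for $\mathcal{C}(L)$ after the height shift. No new computation is required here.

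Third, for the homology identification, the key observation is that the chain complex $\mathcal{C}^\bullet(L)$ decomposes as an internal direct sum of subcomplexes indexed by $q$: $\mathcal{C}^r(L)=\bigoplus_q \mathcal{C}^{r,q}(L)$ and the differential respects this decomposition. Homology commutes with direct sums, so $H^r(L)=\bigoplus_q H^r\big(\mathcal{C}^{\bullet,q}(L)\big)$, where $H^r(\mathcal{C}^{\bullet,q}(L)) = \ker d^{r,q}/\Ima d^{r-1,q}$. It then remains to match the $q$-graded summand $H^{r,q}(L)$ (the degree-$q$ part of $H^r(L)$, as defined just above) with the summand $H^r(\mathcal{C}^{\bullet,q}(L))$. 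This is where one must be slightly careful: the grading on $H^r(L)$ is inherited from the grading on $\mathcal{C}^r(L)$ via cycles and boundaries, so one checks that a cohomology class has pure degree $q$ exactly when it is represented by a degree-$q$ cocycle, which uses that $\ker d^r = \bigoplus_q \ker d^{r,q}$ and $\Ima d^{r-1}=\bigoplus_q \Ima d^{r-1,q}$ — both consequences of homogeneity. Taking $q$-graded pieces of the quotient then gives the claimed isomorphism.

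The main obstacle is conceptual rather than technical: one must be precise about what ``the $q$-graded component $H^{r,q}(L)$'' means and confirm that the naive grading on a subquotient of a graded vector space (grade the kernel, grade the image, take the quotient grading) agrees with computing homology grade-by-grade. This is a standard fact about graded complexes with homogeneous differentials, so I would invoke it directly after verifying homogeneity, rather than re-proving it; the only real content of the lemma is the degree-$0$ property of $d$, which the construction supplies.
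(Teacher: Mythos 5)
Your proof is correct and follows essentially the same route as the paper: the square-zero property is inherited by restriction, and the identification of the homology comes from the direct-sum decomposition $\mathcal{C}^r(L)=\bigoplus_q \mathcal{C}^{r,q}(L)$ together with the additivity of homology. You simply make explicit two points the paper takes for granted — that $d$ is homogeneous of quantum degree $0$ (so the restriction is well defined) and that the grading on the subquotient matches computing homology grade-by-grade — which is a reasonable amount of extra care but not a different argument.
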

\begin{proof}
    As $d^{r,q}$ is a restriction of $d^r,$ we know that $d^{r+1,q}\circ d^{r,q}=0$, making the complex a cochain complex. The isomorphism follows from the additivity of the homology functor applied to $\mathcal{C}^{r}(L)=\bigoplus_{q}\mathcal{C}^{r,q}(L)$.
\end{proof}

We now construct the adjoint maps, Khovanov Dirac, and Khovanov Laplacian for this complex as done in subsection \ref{subsec:laplacian}, so that the kernel of the Laplacian will produce $H^{r,q}(L)$.

First, endow $\mathcal{C}^{r,q}(L)$ with an inner product by specifying it on basis elements. A basis of $\mathcal{C}^{r,q}(L)$ is the restriction of a basis of $\mathcal{C}^{r}(L)$ to the elements of degree $q$. Suppose they are $B^{r,q}=\{e_1,\dots,e_k\}$. Then define

\[
    \langle\cdot,\cdot\rangle:B^{r,q}\times B^{r,q}\to\mathbb{R}
\]

by 

\[
    \langle e_i,e_j\rangle = \begin{cases}
        1, & \text{if } i=j\\
        0, & \text{else}
    \end{cases},
\]

and extend this inner product to all of $\mathcal{C}^{r,q}(L)$. We also need a suitable notion of the dual complex. Khovanov discusses one notion of the dual of the entire graded complex and cube of resolutions in \cite{khovanov_categorification_2000} to show isomorphisms related to the mirror of a knot. For the purpose of constructing the Laplacian, we will simply dualize each of the $q$-graded subcomplexes $\mathcal{C}^{\bullet,q}(L)$. This gives the dual spaces $\Hom(\mathcal{C}^{r,q}(L),\mathbb{R})$ with basis $\{\phi_1,\dots,\phi_k\}$, where $\phi_i$ is defined by $\phi_i(e_i)=1$ and $\phi_i(e_j)=0$ for $i\neq j$. This agrees with the canonical isomorphism with the dual of an inner product space. We also need an inner product on the dual spaces, which we can define for any $f,g\in \Hom(\mathcal{C}^{r,q}(L),\mathbb{R})$ by 

\[
\llangle f,g\rrangle = \sum_{e\in B^{r,q}} f(e)g(e).
\]

With the inner products on $\mathcal{C}^{r,q}(L)$ and its dual in hand, we now have that there is a unique adjoint of $d^{r,q}$ with respect to these inner products,   $\left(d^{r,q}\right)^*:\mathcal{C}^{r+1,q}(L)\to \mathcal{C}^{r,q}(L)$, for each $r$ and $q$.

Our cochain complex diagram becomes

\begin{center}        
    \begin{tikzcd}
    \cdots
    \arrow[r, "", shift left]
    &
    \mathcal{C}^{r-1,q}(L) \arrow[r, "d^{r-1,q}", shift left] \arrow[l, "", shift left]& \mathcal{C}^{r,q}(L) \arrow[r, "d^{r,q}", shift left] \arrow[l, "\left(d^{r-1,q}\right)^*", shift left] & \mathcal{C}^{r+1,q}(L) \arrow[l, "\left(d^{r,q}\right)^*", shift left] \arrow[r, "", shift left] & \arrow[l, "", shift left] \cdots.
    \end{tikzcd}
\end{center}

We are now ready to introduce the central objects of this work.

\begin{definition} The $\boldsymbol{(r,q)}$\textbf{-Khovanov Dirac} of the link projection $L$ is $\mathcal{D}_{L}^{r,q}:\bigoplus_{i=n_{-}}^{r+1} \mathcal{C}^{i,q}(L)\to \bigoplus_{i=n_{-}}^{r+1} \mathcal{C}^{i,q}(L)$, given by

\begin{equation}\label{Dirac}
    \mathcal{D}^{r,q} = \begin{bmatrix}\mathbf{0}_{m_{ \left(-n_{-}\right) }\times m_{ \left(-n_{-}\right)} } & \left(d^{-n_-,q}\right)^* & \mathbf{0}_{m_{ \left(-n_{-}\right) }\times m_{ \left(-n_{-}+2\right)} } & \cdots & \mathbf{0}_{m_{ \left(-n_{-}\right) }\times m_{r} } & \mathbf{0}_{m_{ \left(-n_{-}\right) }\times m_{r+1} }\\
    											 d^{-n_-,q} & \mathbf{0}_{m_{ \left(-n_{-}+1\right) }\times m_{ \left(-n_{-}+1\right)} } & \left(d^{-n_- + 1,q}\right)^* & \cdots &\mathbf{0}_{m_{ \left(-n_{-}+1\right) }\times m_{r} } &\mathbf{0}_{m_{ \left(-n_{-}+1\right) }\times m_{r+1} }\\
    											 \mathbf{0}_{m_{ \left(-n_{-}+2\right) }\times m_{ \left(-n_{-}\right)} }& d^{-n_-+1,q} & \mathbf{0}_{m_{ \left(-n_{-}+2\right) }\times m_{ \left(-n_{-}+2\right)} } & \cdots & \mathbf{0}_{m_{ \left(-n_{-}+2\right) }\times m_{r} } & \mathbf{0}_{m_{ \left(-n_{-}+2\right) }\times m_{r+1} }\\
        \vdots & \vdots & \vdots & \ddots &\vdots & \vdots\\
        
        \mathbf{0}_{m_{r}\times m_{ \left(-n_{-}\right)}} & \mathbf{0}_{m_{r}\times m_{ \left(-n_{-}+1\right)}} & \mathbf{0}_{m_{r}\times m_{ \left(-n_{-}+2\right)}} &\cdots & \mathbf{0}_{m_{r}\times m_{r} } & \left(d^{r,q}\right)^*\\
         \mathbf{0}_{m_{r+1}\times m_{ \left(-n_{-}\right)}} & \mathbf{0}_{m_{r+1}\times m_{ \left(-n_{-}+1\right)}} & \mathbf{0}_{m_{r+1}\times m_{ \left(-n_{-}+2\right)}} &\cdots & d^{r,q} & \mathbf{0}_{m_{r+1}\times m_{r+1}}\\
    \end{bmatrix},
\end{equation}

	where $m_i=\dim C^{i,q}(L)$.
\end{definition}

The construction is almost algebraically identical to the combinatorial Dirac operator, except here we use the adjoint of the boundary operator where we previously used the boundary operator, due to the fact we are working with a cochain complex, and due to the height shift we begin indexing at $-n_-$, where the least nonempty chain complex resides. This configuration will produce the desired relationship between the Khovanov Dirac and Khovanov Laplacian operators.

\begin{definition} The $\boldsymbol{(r,q)}$\textbf{-Khovanov Laplacian} of the link projection $L$ is

\begin{equation}\label{Laplacian}
    \Delta^{r,q}_L = \left(d^{r,q}\right)^*\circ d^{r,q} + d^{r-1,q}\circ\left(d^{r-1,q}\right)^*.
\end{equation}

We sometimes separately consider $\Delta_{L,\text{up}}^{r,q}= \left(d^{r,q}\right)^*\circ d^{r,q}$ and $\Delta_{L,\text{down}}^{r,q}= d^{r-1,q}\circ\left(d^{r-1,q}\right)^*$, so that $\Delta^{r,q}_L=\Delta_{L,\text{up}}^{r,q}+\Delta_{L,\text{down}}^{r,q}.$

\end{definition}

We can obtain the desired key property of  Khovanov  Laplacians in Theorem \ref{theorem:kernel}.

\begin{theorem} The kernel of the   Khovanov Laplacian is isomorphic to the graded, real Khovanov homology group, i.e.
    \[
    \ker\Delta^{r,q}_L\cong H_{\mathbb{R}}^{r,q}(L).
    \]
    \label{theorem:kernel}
\end{theorem}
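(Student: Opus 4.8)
The plan is to reduce Theorem \ref{theorem:kernel} to the classical Hodge-theoretic fact (the Eckmann theorem quoted above) that for a finite-dimensional cochain complex of inner product spaces, the kernel of the combinatorial Laplacian at a given spot is isomorphic to the cohomology at that spot. First I would record that, by Lemma \ref{lemma:subcomplex}, for each fixed $q$ the sequence $\cdots\to\mathcal{C}^{r-1,q}(L)\xrightarrow{d^{r-1,q}}\mathcal{C}^{r,q}(L)\xrightarrow{d^{r,q}}\mathcal{C}^{r+1,q}(L)\to\cdots$ is a genuine cochain complex whose cohomology is $H^{r,q}_{\mathbb{R}}(L)$. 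Each $\mathcal{C}^{r,q}(L)$ is a finite-dimensional real vector space equipped with the inner product built from the preferred basis $B^{r,q}$, and $\bigl(d^{r,q}\bigr)^*$ is the adjoint of $d^{r,q}$ with respect to these inner products, exactly the setup of subsection \ref{subsec:laplacian}. So $\Delta^{r,q}_L=\bigl(d^{r,q}\bigr)^*\circ d^{r,q}+d^{r-1,q}\circ\bigl(d^{r-1,q}\bigr)^*$ is literally the combinatorial Laplacian of this complex, and it suffices to invoke the Hodge decomposition for it.

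The key steps, in order, are the following. (1) Observe $\Delta^{r,q}_L$ is self-adjoint and positive semidefinite, so $\mathcal{C}^{r,q}(L)=\ker\Delta^{r,q}_L\oplus\Ima\Delta^{r,q}_L$. (2) Show $\ker\Delta^{r,q}_L=\ker d^{r,q}\cap\ker\bigl(d^{r-1,q}\bigr)^*$: the inclusion $\supseteq$ is immediate, and for $\subseteq$ one pairs $\Delta^{r,q}_L x$ with $x$ to get $\langle d^{r,q}x,d^{r,q}x\rangle+\langle\bigl(d^{r-1,q}\bigr)^*x,\bigl(d^{r-1,q}\bigr)^*x\rangle=0$, forcing both terms to vanish since the inner product is positive definite. (3) Use the orthogonal (Hodge) decomposition $\mathcal{C}^{r,q}(L)=\ker\Delta^{r,q}_L\oplus\Ima d^{r-1,q}\oplus\Ima\bigl(d^{r,q}\bigr)^*$, which follows from $d^{r,q}\circ d^{r-1,q}=0$ together with the adjoint relations $\bigl(\Ima d^{r-1,q}\bigr)^\perp=\ker\bigl(d^{r-1,q}\bigr)^*$ and $\bigl(\Ima\bigl(d^{r,q}\bigr)^*\bigr)^\perp=\ker d^{r,q}$. (4) Conclude that $\ker d^{r,q}=\ker\Delta^{r,q}_L\oplus\Ima d^{r-1,q}$, so the composite $\ker\Delta^{r,q}_L\hookrightarrow\ker d^{r,q}\twoheadrightarrow\ker d^{r,q}/\Ima d^{r-1,q}$ is an isomorphism. (5) Combine with Lemma \ref{lemma:subcomplex} to get $\ker\Delta^{r,q}_L\cong\ker d^{r,q}/\Ima d^{r-1,q}\cong H^{r,q}_{\mathbb{R}}(L)$. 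One also has to handle the boundary cases $r=-n_-$ (no incoming differential, so $\Delta^{r,q}_L=\Delta^{r,q}_{L,\text{up}}$) and $r$ maximal (no outgoing differential), exactly as the convention $\Delta_0=d_1\circ d_1^*$, $\Delta_N=d_N^*\circ d_N$ is handled in subsection \ref{subsec:laplacian}; these are notational, not substantive.

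I do not expect a serious obstacle here: the content is entirely finite-dimensional linear algebra and is the same argument that proves the Eckmann theorem cited in subsection \ref{subsec:laplacian}. The only points requiring a little care are bookkeeping ones — making sure the inner product on each $\mathcal{C}^{r,q}(L)$ is the one that makes $\bigl(d^{r,q}\bigr)^*$ the matrix transpose used implicitly, checking that the $q$-graded restriction $d^{r,q}$ really does land in $\mathcal{C}^{r+1,q}(L)$ (true because the Khovanov differential has quantum degree $0$, as already noted before Lemma \ref{lemma:subcomplex}), and treating the extreme values of $r$ with the correct truncated Laplacian. So the "hard part" is essentially just citing the right finite-dimensional Hodge decomposition and assembling it with Lemma \ref{lemma:subcomplex}; a clean alternative is to simply state that the complex $\bigl(\mathcal{C}^{\bullet,q}(L),d^{\bullet,q}\bigr)$ with these inner products is a finite cochain complex of inner product spaces and apply the Eckmann/Hodge theorem verbatim, reducing the proof to one or two lines.
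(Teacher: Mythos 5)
Your proposal is correct and matches the paper's proof, which likewise combines Lemma \ref{lemma:subcomplex} with the standard finite-dimensional Hodge-decomposition argument for the combinatorial Laplacian (citing \cite{LimHodge2020}); you simply spell out the decomposition steps that the paper leaves to the reference. Your closing remark about reducing the proof to a one-or-two-line citation is essentially verbatim what the paper does.
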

\begin{proof}
    By lemma \ref{lemma:subcomplex}, $H^{r,q}(L)$ is the $r$-th homology of the chain complex $\mathcal{C}^{\bullet,q}(L)$. By the same argument for the standard combinatorial Laplacian in \cite{LimHodge2020}, $\ker\Delta^{r,q}_L$ is also isomorphic to the homology of the same chain complex.  
\end{proof}

As immediate consequences, we can re-frame key results of Khovanov homology in terms of the Khovanov Laplacian. 

\begin{corollary} The Poincar\'e polynomial may be expressed in terms of the Laplacian:
\[
Kh_\mathbb{R}(L)(q,t) = \sum_{r,j}t^r q^j \dim \ker \Delta^{r,j}_L.
\]
\end{corollary}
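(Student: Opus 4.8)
The plan is to show this corollary follows almost immediately from Theorem \ref{theorem:kernel} together with the definitions of the graded Poincar\'e polynomial and the graded dimension. First I would recall that, by definition, $Kh(L) = \sum_r t^r \widehat{\dim} H^r(L)$, and that for a graded vector space $W = \bigoplus_j W_j$ the graded dimension is $\widehat{\dim} W = \sum_j q^j \dim W_j$. Applying this to $W = H^r(L)$, whose $q$-graded pieces are the $H^{r,q}(L)$, gives $\widehat{\dim} H^r(L) = \sum_j q^j \dim H^{r,j}(L)$, so that $Kh_{\mathbb{R}}(L)(q,t) = \sum_{r,j} t^r q^j \dim H_{\mathbb{R}}^{r,j}(L)$.

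Next I would invoke Theorem \ref{theorem:kernel}, which states $\ker \Delta_L^{r,j} \cong H_{\mathbb{R}}^{r,j}(L)$ as (real) vector spaces. Since isomorphic finite-dimensional vector spaces have equal dimension, $\dim \ker \Delta_L^{r,j} = \dim H_{\mathbb{R}}^{r,j}(L)$ for every pair $(r,j)$. Substituting this equality termwise into the sum above yields exactly $Kh_{\mathbb{R}}(L)(q,t) = \sum_{r,j} t^r q^j \dim \ker \Delta_L^{r,j}$, which is the claimed identity.

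The only mild subtlety worth a sentence in the writeup is that the double sum is genuinely finite: $L$ has finitely many crossings, so the cube of smoothings is finite, each $\mathcal{C}^{r,j}(L)$ is finite-dimensional, and only finitely many $(r,j)$ give a nonzero chain group (hence nonzero $\ker \Delta_L^{r,j}$); thus all the sums converge formally as Laurent polynomials in $q$ and $t$ and the rearrangement of terms is unproblematic. There is no real obstacle here — the content is entirely carried by Theorem \ref{theorem:kernel}, and the corollary is a bookkeeping restatement. I would therefore keep the proof to two or three lines: expand $\widehat{\dim}$, apply the kernel isomorphism dimension by dimension, and collect terms.
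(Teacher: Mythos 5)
Your proposal is correct and matches the paper's reasoning: the paper states this corollary as an immediate consequence of Theorem \ref{theorem:kernel}, which is exactly what you do by expanding $\widehat{\dim}\,H^r(L)$ into its $q$-graded pieces and substituting $\dim\ker\Delta_L^{r,j}=\dim H^{r,j}_{\mathbb{R}}(L)$. Your added remark about finiteness of the sum is a harmless bonus; nothing further is needed.
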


\begin{corollary} The normalized Jones polynomial may be expressed in terms of the Laplacian:
\[
J(L) = \frac{1}{q+q^{-1}}\sum_{r,j}(-1)^r q^j \dim\ker\Delta^{r,j}_L.
\]
\end{corollary}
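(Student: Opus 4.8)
The statement to prove is the final corollary: the normalized Jones polynomial equals
\[
J(L) = \frac{1}{q+q^{-1}}\sum_{r,j}(-1)^r q^j \dim\ker\Delta^{r,j}_L.
\]

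The plan is to chain together the identifications already established. First I would invoke Theorem \ref{theorem:kernel}, which gives $\dim\ker\Delta^{r,j}_L = \dim H^{r,j}_{\mathbb{R}}(L)$ for every bidegree $(r,j)$. Substituting this into the right-hand side turns the sum into $\sum_{r,j}(-1)^r q^j \dim H^{r,j}_{\mathbb{R}}(L)$. Recognizing that $\widehat{\dim} H^r(L) = \sum_j q^j \dim H^{r,j}(L)$ and that $t^r$ evaluated at $t=-1$ produces $(-1)^r$, this sum is exactly $Kh(L)(q,t)\big|_{t=-1}$, i.e.\ the value of the graded Poincar\'e polynomial at $t=-1$ (equivalently, this is the preceding corollary with $t=-1$). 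By Theorem~3 of the excerpt (Khovanov's result that $Kh(L)$ at $t=-1$ is the un-normalized Jones polynomial), this quantity is the un-normalized Jones polynomial $\hat J(L)$.

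Second, I would relate the un-normalized Jones polynomial to the normalized one. The standard normalization convention — consistent with the graded Euler characteristic setup in which $\widehat{\dim} V = q + q^{-1}$ records the unknot — is that $\hat J(L) = (q+q^{-1})\, J(L)$, so that the normalized Jones polynomial of the unknot is $1$. Dividing the displayed identity from the first step by $q+q^{-1}$ therefore yields precisely $J(L) = \frac{1}{q+q^{-1}}\sum_{r,j}(-1)^r q^j \dim\ker\Delta^{r,j}_L$, which is the claim. The only subtlety worth a sentence in the write-up is pinning down the normalization convention: one should state explicitly that $J$ denotes the version normalized so the unknot gives $1$, which is what the factor $\frac{1}{q+q^{-1}}$ enforces.

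The argument is essentially a bookkeeping composition of Lemma~\ref{lemma:subcomplex}, Theorem~\ref{theorem:kernel}, and Khovanov's theorem, so there is no serious obstacle. The one place to be careful is the order of quantifiers in the double sum and the fact that $\ker\Delta^{r,j}_L$ is finite-dimensional and vanishes for all but finitely many $(r,j)$ (since $L$ has finitely many crossings, hence finitely many smoothings and bounded quantum grading), which makes the sum well-defined and lets it be manipulated freely; this is immediate but worth noting so the polynomial identity makes sense termwise. I would therefore keep the proof to three or four sentences: cite Theorem~\ref{theorem:kernel} to replace $\dim\ker\Delta^{r,j}_L$ by $\dim H^{r,j}_{\mathbb{R}}(L)$, observe that the resulting sum is $Kh_{\mathbb{R}}(L)(q,-1)$, apply Khovanov's theorem to identify it with the un-normalized Jones polynomial, and divide by $q+q^{-1}$ using the normalization convention.
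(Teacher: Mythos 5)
Your proposal is correct and matches the paper's intent exactly: the paper presents this corollary as an immediate consequence of Theorem \ref{theorem:kernel} combined with Khovanov's theorems, which is precisely the chain of substitutions (replace $\dim\ker\Delta^{r,j}_L$ by $\dim H^{r,j}_{\mathbb{R}}(L)$, evaluate the Poincar\'e polynomial at $t=-1$, divide by $q+q^{-1}$ to pass from the un-normalized to the normalized Jones polynomial) that you spell out. Your added remarks on the normalization convention and the finiteness of the sum are fine but not needed beyond what the paper already assumes.
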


\begin{definition} We denote the \textbf{spectrum} of $\Delta_L^{r,q}$ in nondecreasing order (as the eigenvalues are again nonnegative) by $S_L^{r,q}=\{\lambda^{r,q}_0,\lambda^{r,q}_1,\dots,\lambda^{r,q}_M\}$ and denote the least nonzero eigenvalue by $\lambda^{r,q}$ or just $\lambda$. 
\end{definition}

Now for $K$ the link diagram of the right-handed trefoil in Figure \ref{fig:trefoil}, we compute the Laplacians $\Delta^{0,3}_K$ and $\Delta^{1,5}_K$ and their spectra. The full collection of spectra for $K$ are in Table 1 of the supplementary information. The least nonzero eigenvalues are plotted as a heatmap in Figure \ref{fig:trefoil_heatmap}, for two different trefoil projections, the ``standard" one being the one used here. The ranks of the Khovanov homology are also given in Figure \ref{fig:trefoil_heatmap} for comparison.

First, we calculate $\Delta^{0,3}_K$, for which we would need $d^{-1,3}$ and $d^{0,3}$. The chain group $\mathcal{C}^{-1,3}(K)$ is zero, so $d^{-1,3}=0$ and $\Delta^{0,3} = \left(d^{0,3}\right)^*\circ d^{0,3}$. To clarify the matrix form of the differentials, we label the rows and columns by the basis elements of $\mathcal{C}^{r,q}(L)$, with both the labeled vector space $V_\alpha$ and the corresponding generators of each $V$.

\begin{align*}
    d^{0,3} &= \begin{blockarray}{cccc}
          &  &V_{000} & V_{000}  \\
          &  & v_{+}^{1}v_{-}^{2} & v_{-}^{1}v_{+}^2   \\
            \begin{block}{cc(cc)}
              V_{001} &v_{-}^{1} & 1 & 1  \\
              V_{010} &v_{-}^{1} & 1 & 1 \\
              V_{100} &v_{-}^{1} & 1 & 1 \\
        \end{block}
        \end{blockarray}.
\end{align*}

Then, according to Eq. (\ref{Laplacian}), the Laplacian matrix is  

\begin{align*}
    \Delta^{0,3}_K &= \begin{pmatrix}
        1 & 1  & 1\\
        1 & 1 & 1
    \end{pmatrix}\begin{pmatrix}
        1 & 1\\
        1 & 1\\
        1 & 1
    \end{pmatrix}\\
    &= \begin{pmatrix}
        3 & 3\\
        3 & 3
    \end{pmatrix}.
\end{align*}

This matrix has spectrum $S^{0,3}_K = \{0,6\}$, which also gives $\beta^{0,3}_K = \dim\ker \Delta^{0,3}_K = \dim H^{r,q}_\mathbb{R}(K) = 1$ and the least nonzero eigenvalue is $\lambda=6.$

To do the same for $\Delta^{1,5}_K$, we need $d^{0,5}$ and $d^{1,5}$. They are

\begin{align*}
        d^{0,5} &= \begin{blockarray}{ccc}
          &   & V_{000}  \\
          &  & v_{+}^1V_{+}^2 \\
            \begin{block}{cc(c)}
                V_{001}  &  v_{+}^{1} & 1\\
                V_{010}  &  v_{+}^{1} & 1\\
                V_{100}  &  v_{+}^{1} & 1\\
        \end{block}
        \end{blockarray}\\
    d^{1,5} &= \begin{blockarray}{ccccc}
          &  &V_{001}             & V_{010} & V_{100}\\
          &  & v_{+}^{1} &v_{+}^{1} & v_{+}^{1} \\
            \begin{block}{cc(ccc)}
                V_{011} & v_{+}^{1}v_{-}^3 & 1 & -1 & 0\\
                V_{011} & v_{-}^{1}v_{+}^3 & 1 & -1 & 0\\
                V_{101} & v_{+}^{1}v_{-}^2 & 1 & 0 & -1\\
                V_{101} & v_{-}^{1}v_{+}^2 & 1 & 0 & -1\\
                V_{110} & v_{+}^{1}v_{-}^2 & 0 & 1 & -1\\
                V_{110} & v_{-}^{1}v_{+}^2 & 0 & 1 & -1\\
        \end{block}
        \end{blockarray}.
\end{align*}

Then, according to Eq. (\ref{Laplacian}), the Laplacian matrix is  

\begin{align*}
    \Delta^{1,5}_K &= \begin{pmatrix}
      1 \\
      1 \\
      1
    \end{pmatrix}\begin{pmatrix}
       1 & 1 & 1
    \end{pmatrix} + \begin{pmatrix}
        1 & 1   & 1  & 1  & 0 & 0\\
        -1 & -1 & 0 & 0   & 1 & 1\\
        0 & 0   & -1 & -1 & -1 & -1
    \end{pmatrix}\begin{pmatrix}
        1 & -1 & 0\\
        1 & -1 & 0\\
        1 & 0 & -1\\
        1 & 0 & -1\\
        0 & 1 & -1\\
        0 & 1 & -1
    \end{pmatrix}\\
    &= \begin{pmatrix}
        1 & 1 & 1\\
        1 & 1 & 1\\
        1 & 1 & 1
    \end{pmatrix} + \begin{pmatrix}
        4 & -2 & -2\\
        -2 & 4 & -2\\
        -2 & -2 & 4
    \end{pmatrix}\\
    &= \begin{pmatrix}
        5 & -1 & -1\\
        -1 & 5 & -1\\
        -1 & -1 & 5
    \end{pmatrix}.
\end{align*}

The eigenvalues of this matrix are $S_K^{1,5}=\{3,6,6\}$. All of the nonempty spectra for $K$ are reported in the supporting information Table 1. The least nonzero eigenvalues for each grading are reported in Figure \ref{fig:trefoil_heatmap}.

\begin{figure}[htbp]
    \centering
    \begin{subfigure}{0.2\textwidth}
        \centering
        \includegraphics[width=\textwidth]{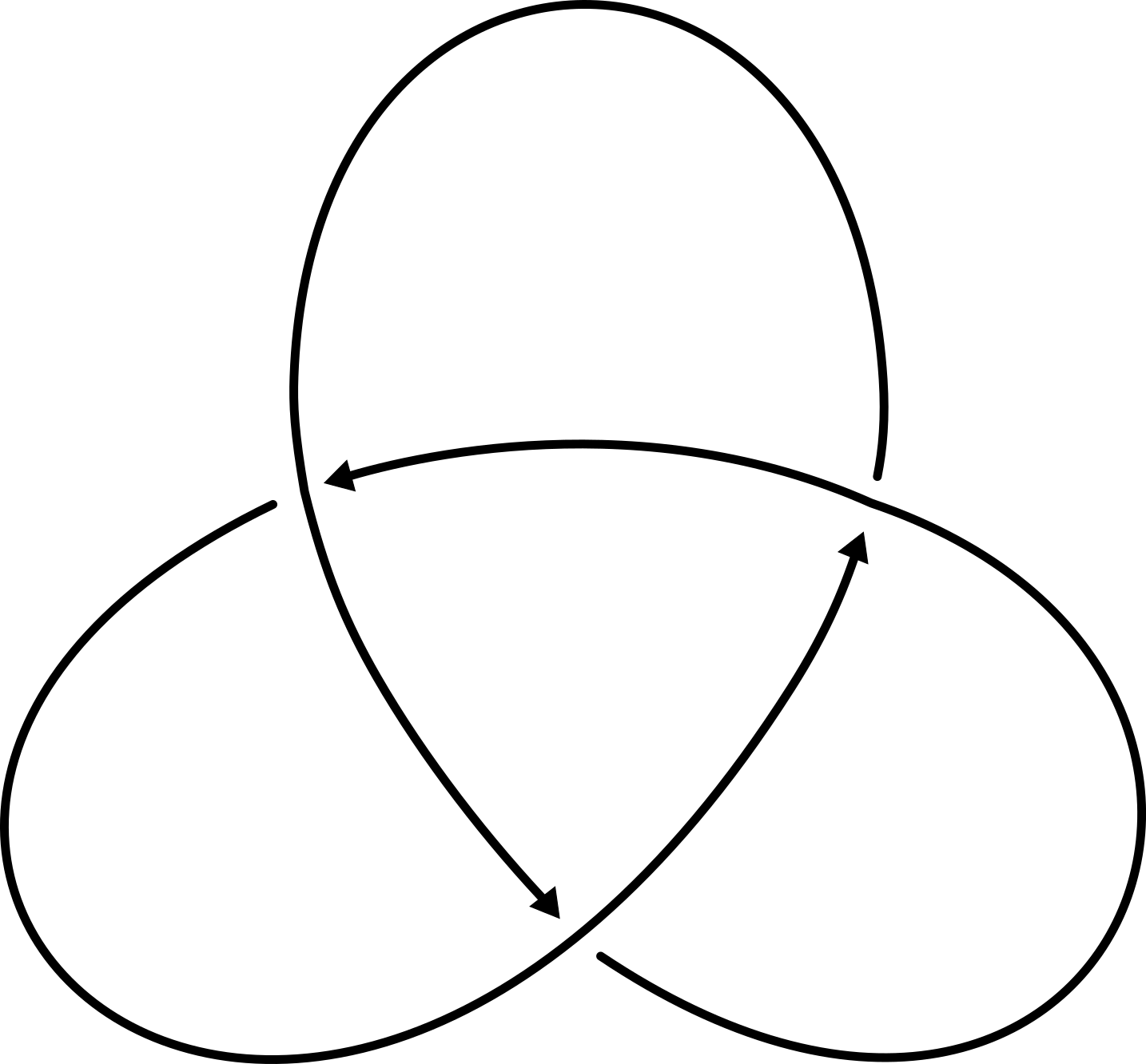}
    \end{subfigure}
    \begin{subfigure}{0.25\textwidth}
        \includegraphics[width=\textwidth]{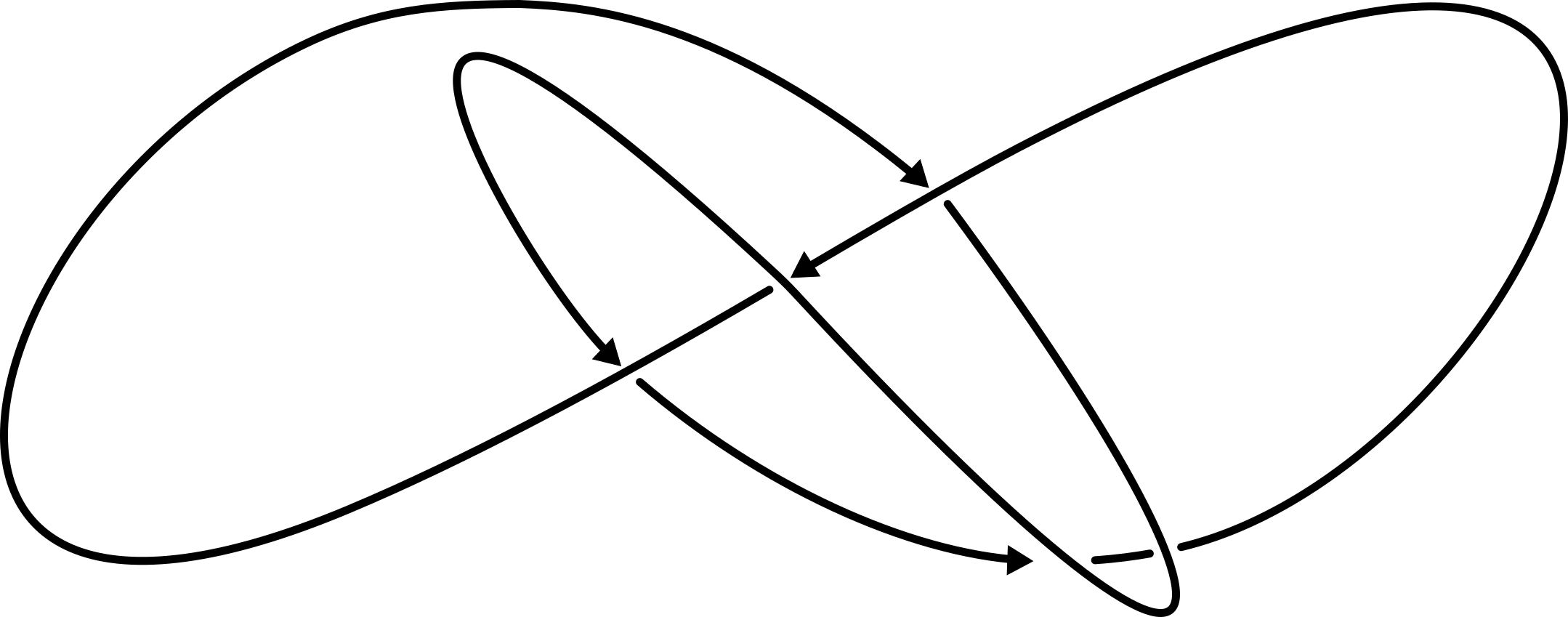}
    \end{subfigure}
    \begin{subfigure}{0.25\textwidth}
        \includegraphics[width=\textwidth]{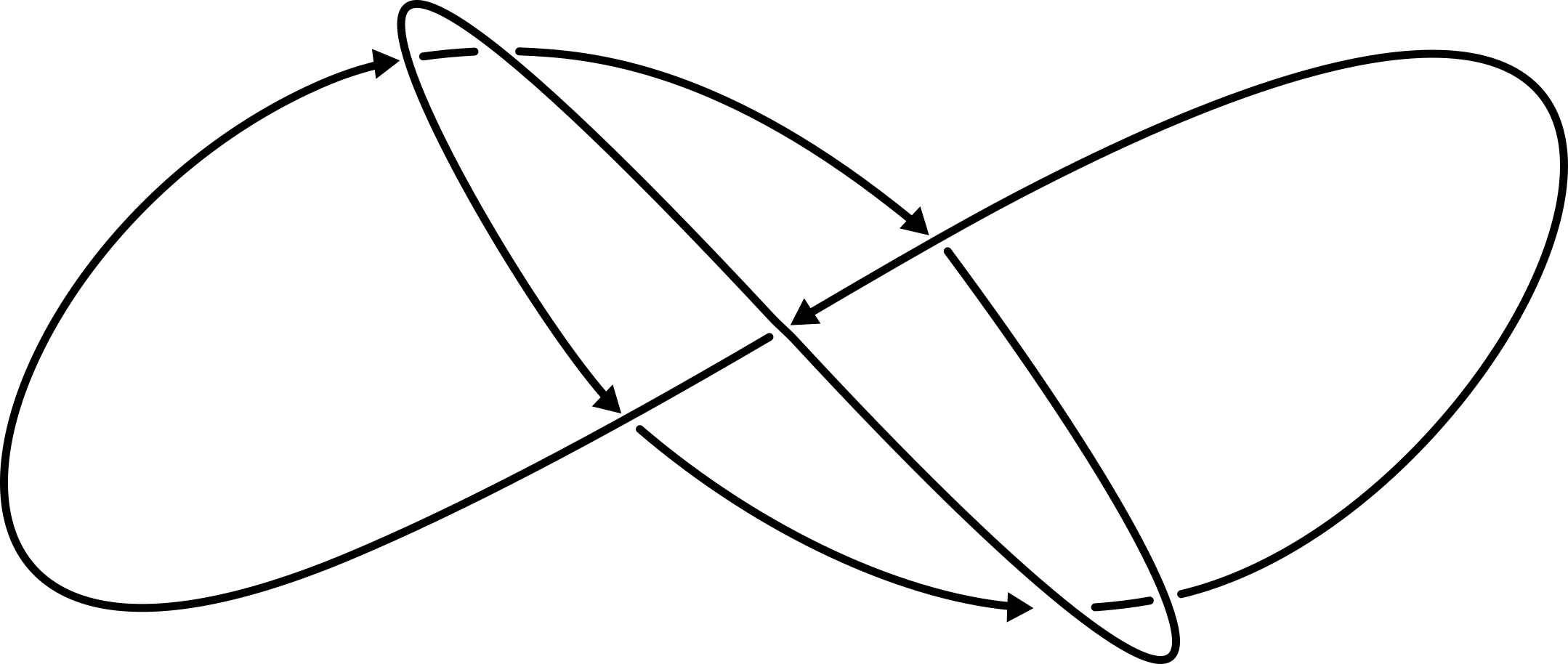}
    \end{subfigure}
    \caption{From left to right, trefoil diagrams showing with the same crossings as the $x-z$ projection of the parametrization in equation \ref{eqn:trefoil}, the $x-y$ projection after one Reidemeister $1$ move is applied, and the $x-y$ projection. These are ordered by the number of crossings.}
    \label{fig:trefoil_right_projections}
\end{figure}

\begin{figure}[htbp]
    \centering
    \includegraphics[width=8cm]{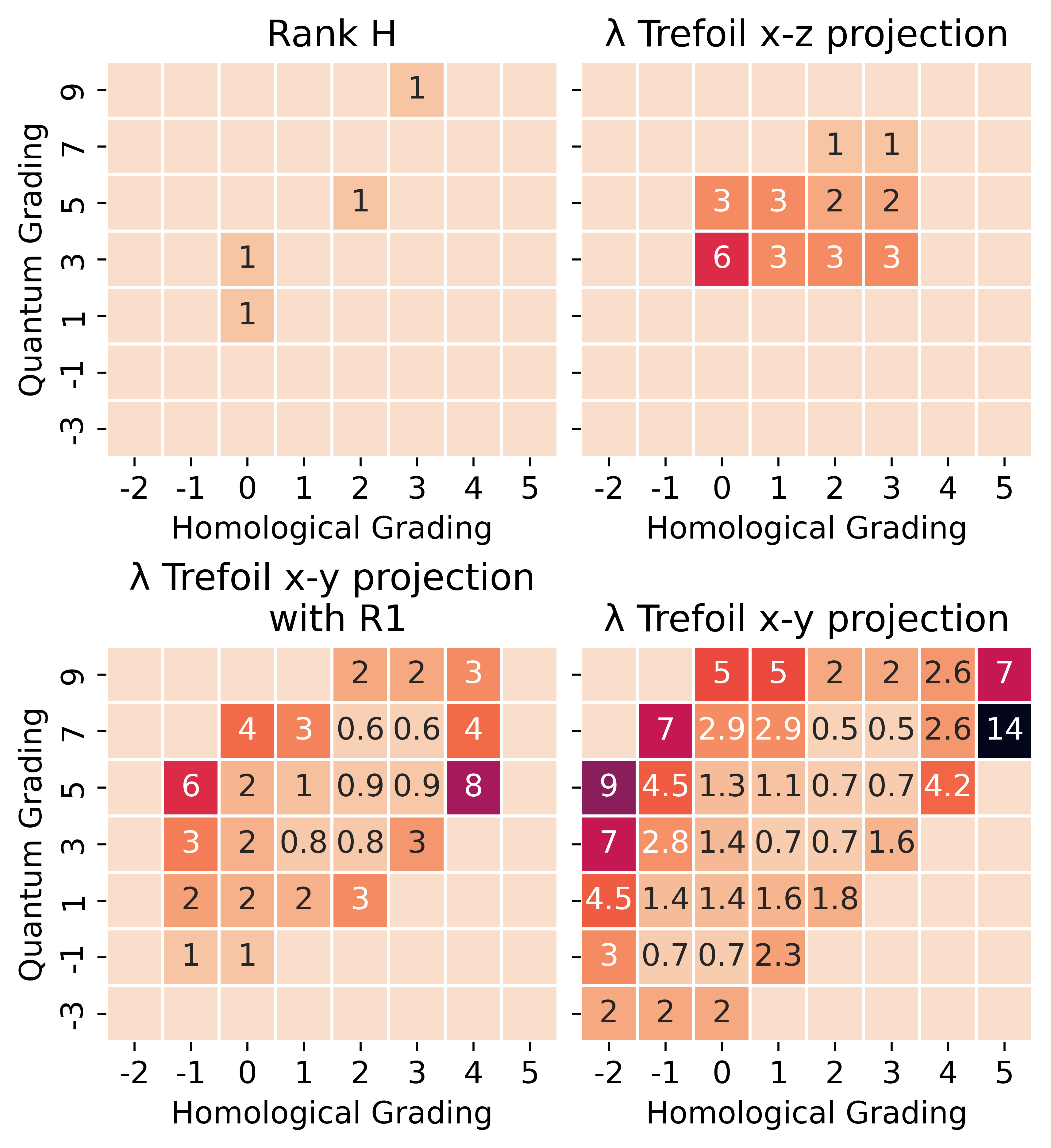}
    \caption{For the right-handed trefoil we show the ranks of the Khovanov homology groups (Top left), the least nonzero eigenvalue of the Khovanov Laplacian of the trefoil using the $x-z$ projection of the parametrization in Eq. (\ref{eqn:trefoil}) (Top right), the $x-y$ projection after one Reidemeister $1$ move is applied (Bottom left), and the $x-y$ projection (Bottom right). These are ordered by the number of crossings.}
    \label{fig:trefoil_heatmap}
\end{figure}

Now we construct an example of a Khovanov Dirac for the same trefoil diagram, 

\[
	\mathcal{D}_K^{1,3} = \begin{bmatrix} \mathbf{0}_{m_0\times m_0} & \left(d_K^{0,3}\right)^* & \mathbf{0}_{m_0\times m_2}\\
										d_K^{0,3} & \mathbf{0}_{m_1\times m_1} & \left(d_K^{1,3}\right)^*\\
										\mathbf{0}_{m_2\times m_0} & d_K^{1,3} & \mathbf{0}_{m_2\times m_2} 
	\end{bmatrix}.
\]

	We previously calculated 
	\[
		d^{0,3}_K = \begin{pmatrix}
        1 & 1\\
        1 & 1\\
        1 & 1
    \end{pmatrix},
    \]
    and we can similarly calculate that 
    
    \[
    		d^{1,3}_K = \begin{pmatrix}
    				1 & -1 & 0\\
    				1 & 0 & -1\\
    				0 & 1 & -1
    		\end{pmatrix}.
    \]
    
    Finally, note that $m_0=\dim C_K^{0,3}=2, m_1=\dim C_K^{1,3}=3$, and $m_2=\dim C_K^{2,3}=3$. Then the Khovanov Dirac is
    
    \[
    		\mathcal{D}_{K}^{1,3} = \begin{bmatrix}
			0 & 0 & 1 & 1 & 1 & 0 & 0 & 0 \\
			0 & 0 & 1 & 1 & 1 & 0 & 0 & 0 \\
			1 & 1 & 0 & 0 & 0 & 1 & 1 & 0 \\
			1 & 1 & 0 & 0 & 0 & -1 & 0 & 1 \\
			1 & 1 & 0 & 0 & 0 & 0 & -1 & -1 \\
			0 & 0 & 1 & -1 & 0 & 0 & 0 & 0 \\
			0 & 0 & 1 & 0 & -1 & 0 & 0 & 0 \\
			0 & 0 & 0 & 1 & -1 & 0 & 0 & 0 
		\end{bmatrix}.
    \]
    
    In the same way as we had for the combinatorial Dirac and Laplacian operators, $\left(\mathcal{D}^{r,q}_L\right)$ is a block diagonal matrix formed by $\Delta^{-n_{-},q}_L,\Delta^{-n_{-}+1,q}_L,\dots,\Delta^{r,q}_{L},\Delta^{r+1,q}_{L,\text{up}}$. In this example we can compute that
    
    \[
    		\left(\mathcal{D}_{K}^{1,3}\right)^2 = \begin{bmatrix}
				3 & 3 & 0 & 0 & 0 & 0 & 0 & 0 \\
				3 & 3 & 0 & 0 & 0 & 0 & 0 & 0 \\
				0 & 0 & 4 & 1 & 1 & 0 & 0 & 0 \\
				0 & 0 & 1 & 4 & 1 & 0 & 0 & 0 \\
				0 & 0 & 1 & 1 & 4 & 0 & 0 & 0 \\
				0 & 0 & 0 & 0 & 0 & 2 & 1 & -1 \\
				0 & 0 & 0 & 0 & 0 & 1 & 2 & 1 \\
				0 & 0 & 0 & 0 & 0 & -1 & 1 & 2 
		\end{bmatrix}.
    \]
    
    The first entry on the block diagonal is the previously-computed $\Delta_K^{0,3},$ and the others are $\Delta_K^{1,3}$ and $\Delta_{K,\text{up}}^{2,3}$. The eigenvalues of $\Delta_{K}^{0,3}$ are $\{0,6\}$, of $\Delta_{K}^{1,3}$ are $\{3,3,6\}$, and of $\Delta_{K,\text{up}}^{2,3}$ are $\{0,3,3\}$. The eigenvalues of $\mathcal{D}_K^{1,3}$ are $\{-\sqrt{6},-\sqrt{3},-\sqrt{3},0,0,\sqrt{3},\sqrt{3},\sqrt{6}\},$ which are in correspondence with the square roots of the eigenvalues of $\Delta_K^{0,3}$, $\Delta_K^{1,3}$, and $\Delta_{K,\textrm{up}}^{2,3}$.

Consider now the trefoil parametrized by the curve in $\mathbb{R}^3$ given by 

\begin{equation} \label{eqn:trefoil}   
(x,y,z) = (\sin u + 2\sin (2u), \sin(3u), 2\cos 2u - \cos(u))
\end{equation}

for $u\in[0,2\pi]$. The three knot diagrams in Figure \ref{fig:trefoil_right_projections} can be obtained from this parametrization in the following way: the standard trefoil diagram by projecting to the $x-z$ plane and at each crossing taking the portion with greater $y$-value to be the over crossing. A distinct diagram, now with $7$ crossing can be found from projecting to the ($x-y$) plane and taking the portion with greater $z$-value to be the over crossing. We obtain the final diagram, now with $5$ crossings, by performing a Reidemeister $1$ move on the $7$-crossing diagram. We compute the Khovanov Laplacian of these diagrams and display the least nonzero eigenvalue in Figure \ref{fig:trefoil_heatmap}, alongside the rank of the Khovanov homology groups. This demonstrates how the same knot can produce a notably distinct set of Khovanov Laplacian eigenvalues, depending on how we represent the knot as a planar diagram. The purpose of this work is not to produce a link invariant, but rather a computational tool to study link diagrams. 

\subsection{Analysis}

We would now like to see if the Khovanov Laplacian provides additional insight over Khovanov homology alone. Khovanov homology may not always detect achirality of a knot, the property that a knot is equivalent to its mirror, which is often also called amphichirality. Formally, the \textbf{mirror} of a knot or link $L$ is a link $\bar{L}$ such that there is an orientation-reversing homeomorphism of the ambient space the link is embedded in. All such links are equivalent. The operation of mirroring a link impacts the corresponding link diagram by switching the roles of over crossings and under crossings. Other knot invariants may distinguish a knot from its mirror better than Khovanov homology can \cite{Watson2007_identical}. We consider the same phenomenon from the perspective of the spectra of the Laplacian. Primarily, we find that the property of the spectrum being ``symmetric" is correlated with achirality in knots $K$ with $10$ or fewer crossings such that the Khovanov homology or Khovanov invariant does not distinguish the knot from its mirror: $Kh(K)=Kh(\bar{K})$. This is expected of an achiral knot, but we would like to distinguish non-achiral (chiral) knots $K$ from their mirror $\bar{K}$.

Concretely, We know $H^{r,q}(K)= H^{-r,-q}(\bar{K})$ for any knot $K$ and if $K$ is an achiral knot this implies $H^{r,q}(K)=H^{-r,-q}(K)$ \cite{khovanov_categorification_2000}. The case where Khovanov homology cannot distinguish a chiral knot from its mirror is when $H^{r,q}(K) = H^{-r,-q}(K)$, but $K\not\simeq \bar{K}$.

\begin{figure}[htbp]
    \centering
    \includegraphics[width=3cm]{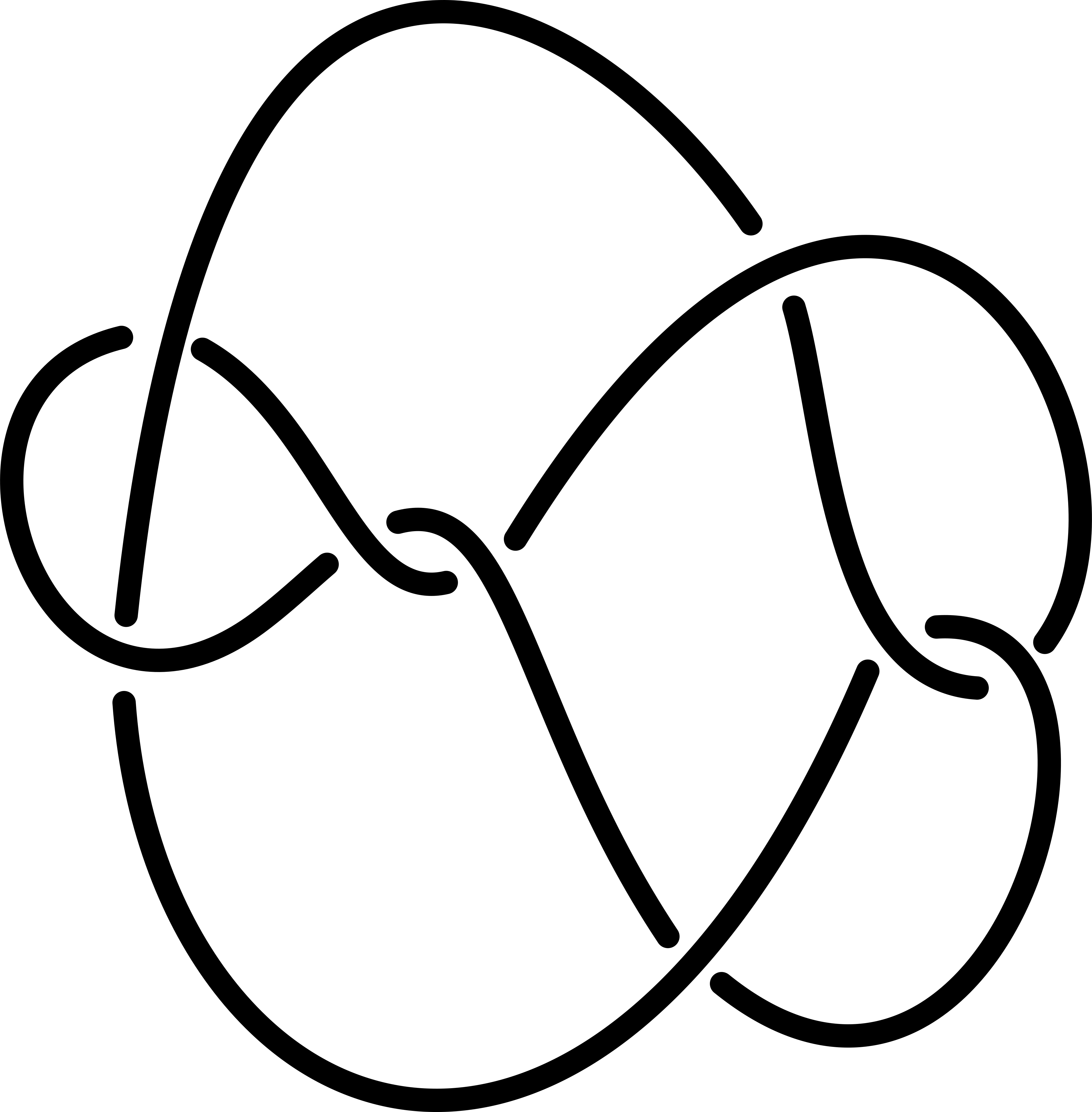}
    \caption{A planar diagram for the prime knot $8_{12}$.}
    \label{fig:8_12}
\end{figure}

Analogously, by the construction of the Laplacian we expect that $S^{r,q}_K=S^{-r,-q}_{\bar{K}}$ as multisets for any $K$, and if $K$ is an achiral knot we might expect $S^{r,q}_K = S^{-r,-q}_K$. This is true for $17$ of the $20$ achiral knots of up to $10$ crossings, with the exceptions being $8_{12}, 10_{43},$ and $10_{37}$. Let us take a closer look at $8_{12}$, with the planar diagram in figure \ref{fig:8_12}, along with its mirror. Because of the result on Khovanov homology, we know that only the non-harmonic spectra may differ. For example, $\dim H^{-4,-9} = \dim H^{4,9} = 1$. However, $S^{-4,-7}_{8_{12}}\neq S^{4,7}_{8_{12}}$. There are several other pairs $(r,q)$ for which $S^{r,q}_{8_{12}}\neq S^{-r,-q}_{8_{12}}$: $(4,5)$, $(4,3)$, $(4,1)$, $(3,7)$ $(3,5),$ $(3,3)$, $(3,1)$, $(2,5)$, $(2,3)$, $(2,1)$, $(2,-1)$, $(1,5)$, $(1,3)$, $(1,1)$, $(1,-1)$, $(1,-3)$, $(0,3)$, and $(0,1)$. They agree only for $S^{4,9}=S^{-4,-9}=\{0\}$, $S^{4,-1}=S^{-4,1}=\{8\}$, $S^{3,-1}=S^{-3,1}=\{8,8,8,8,8,8,10,10\}$, $S^{-2,3}=S^{2,-3}=\{6,6\}$, and $S^{0,5}=S^{0,-5}=\{4,4\}$. We now look closely at $\Delta^{3,7}_{8_{12}}$ and $\Delta^{-3,-7}_{8_{12}}$, a case where these pairs differ when Khovanov homology is nonzero:

\[
	\Delta_{8_{12}}^{3,7} = \begin{bmatrix}
		3  & -1 & 1  & -1 & 1  & -1 & 0 & -1\\
		-1 & 3  & -1 & 1  & -1 & 1  & 0 & 1\\
		1  & -1 & 2  & -1 & 0  & -1 & 0 & 0\\
		-1 & 1  & -1 & 2  & -1 & 0  & 0 & 1\\
		1  & -1 & 0  & -1 & 3  & 0  & 1 & -1\\
		-1 & 1  & -1 & 0  & 0  & 2  & -1 & 0\\
		0  & 0  & 0  & 0  & 1  & -1 & 2  & -1\\
		-1 & 1  & 0  & 1  & -1 & 0  & -1 & 3
	\end{bmatrix}, \Delta_{8_{12}}^{-3,-7} = \begin{bmatrix}
		2 & 1 & 1 & 1 & 0 & 0 & 1 & 0\\
		1 & 3 & 1 & 1 & 0 & 0 & 1 & 1\\
		1 & 1 & 3 & 1 & 0 & 0 & 1 & 1\\
		1 & 1 & 1 & 2 & 0 & 0 & 0 & 1\\
		0 & 0 & 0 & 0 & 3 & 1 & 1 & 1\\
		0 & 0 & 0 & 0 & 1 & 3 & 1 & 1\\
		1 & 1 & 1 & 0 & 1 & 1 & 2 & 0\\
		0 & 1 & 1 & 1 & 1 & 1 & 0 & 2
	\end{bmatrix}.
\]

The spectra for these are 

\begin{align*}
	S^{3,7}_{8_{12}} &= \{0, 0.381966, 1.52265, 2, 2, 2.61803, 4.55193, 6.92542\}\\
	S^{-3,-7}_{8_{12}} &= \{0, 0.41356, 1.48486, 2, 2, 2.76511, 3.70347, 7.63299\}.
\end{align*}

Both of these have $0$ as an eigenvalue with multiplicity $1$, so $\dim H^{3,7}_{8_{12}}=\dim H^{-3,-7}_{8_{12}}=1$. This symmetry, or lack thereof, in the nonzero eigenvalues is independent of the presence of nontrivial Khovanov homology.

We now move from achiral to chiral knots. There are $4$ chiral knots with $10$ crossings such that $H^{r,q}(K)=H^{-r,-q}(K)$ for every $r,q$, so Khovanov Homology does not distinguish the knot from its mirror. They are $10_{48}, 10_{71}, 10_{91},$ and $10_{104}$. We find that for each of these, there is at least one set of non-symmetric spectra, i.e. there exists $r$ and $q$ such that $S^{r,q}(K)\neq S^{-r,-q}(K)$ as multisets. For example, $S^{0,7}_{10_{48}}=\{4, 4.38197, 5.13919, 6.61803, 6.7459, 9.11491\}$, while $S^{0,-7}_{10_{48}}=\{6,6,6,6,6,6\}.$ Also note that since Khovanov homology detects the other chiral knots of $10$ crossings, so too does the Khovanov Laplacian,  via the multiplicity of $0$ as an eigenvalue.

In summary, all of the chiral prime knots up to $10$ crossings that are not distinguished from their mirrors by Khovanov homology give rise to distinct Khovanov Laplacian non-harmonic eigenvalues from their mirrors. Additionally, for  achiral knots,  Khovanov Laplacian has distinct eigenvalues for certain cases. 
Since the spectrum is not a link invariant, we do not claim that a symmetric spectrum implies achirality, but that these properties are correlated and the Khovanov  Laplacian encodes information that is distinct from the Khovanov homology.  

\section{Conclusion}\label{sec:conclusion}

Topological Laplacians, such as combinatorial Laplacians defined on simplicial complexes and Hodge Laplacians defined differentiable manifolds,   inform us about homological group   properties of the spaces via their harmonic spectra, and their non-harmonic spectra can inform us about representation-dependent or topological space-dependent  properties. Topological Dirac shares  similar properties with topological Laplacians and have many important applications. 
For knots and links, we formulate a  Khovanov  Laplacian and   Khovanov  Dirac for the graded chain complex that is used to calculate Khovanov homology, thus recovering Khovanov homology $H_{\mathbb{R}}^{r,q}(K)$ via $\ker\Delta_K^{r,q}$. We calculate the spectra of the Khovanov Laplacian for several knots, and find a correlation between the symmetry of these eigenvalues and the chirality of the knots in some cases where Khovanov homology does not detect it. Since   Khovanov Laplacians inform us of representation-dependent properties, it is important to first define a Khovanov Laplacian on the chain complexes as originally formulated, but this is extremely computationally demanding. There are more efficient methods to compute Khovanov homology \cite{Bar-Natan_Fast}, which would lead to alternative  Khovanov Laplacians that still retain Khovanov homology via their kernel, but would likely have different non-harmonic spectra from the one constructed in this work. 

This is the first Laplacian or Dirac constructed for a knot homology theory.  Khovanov  Laplacians and Dirac may be formulated for other knot homology theories to study those. Just as Khovanov homology categorifies the Jones polynomial, there are homology theories, such as Knot Floer homology \cite{OZSVATH200458}, which categorifies the Alexander polynomial, another major link polynomial invariant. Connections may be possible between a  Khovanov Laplacian designed for Knot Floer homology and the graph Laplacian introduced in \cite{silver_knot_2019}, which can also be used to recover the Alexander polynomial. This  Khovanov Laplacian could be extended to a persistent topological Laplacian by incorporating ideas from the theory of evolutionary Khovanov homology \cite{Shen_2024}. Finally, extension to  persistent Khovanov Dirac can be formulated via the approaches on persistent Dirac \cite{ameneyro2024quantum, wee2023persistent, suwayyid2023persistent}.

\section*{Code availability}

Code used in this work is available at  \href{https://github.com/bdjones13/KhovanovLaplacian/}{https://github.com/bdjones13/KhovanovLaplacian/}.

\section*{Acknowledgments}
This work was supported in part by NIH grants R01AI164266 and R35GM148196, NSF grants DMS-2052983   and IIS-1900473,   MSU Foundation, and Bristol-Myers Squibb 65109.

\bibliographystyle{abbrv}

\bibliography{main_v3}

\end{document}